\theoremstyle{plain}
\newtheorem{theorem}{Theorem}[section]
\newtheorem{corollary}{Corollary}
\theoremstyle{definition}
\newtheorem{defn}{Definition}[section]
\title{}
\begin{document}
\title [Combinatorialization of identities,  general  Linear Recurrences  ]{Combinatorialization of Sury and McLaughlin identities,  general  Linear Recurrences  IN A UNIFIED approach } 
\author[Sudip Bera]{Sudip Bera}
\address[Sudip Bera]{Department of Mathematics, Indian Institute of Science, Bangalore 560 012}
\email{sudipbera@iisc.ac.in}
\keywords{combinatorial proof; determinants; digraphs;   symmetric functions; words }
\subjclass[2010]{05A19; 05A05; 05C30; 05C38}
\maketitle
\begin{abstract} 
In this article we provide with combinatorial proofs of some recent identities due to Sury and McLaughlin. We show that, the solution of a general linear recurrence with constant coefficients can be interpreted as a determinant of a matrix. Also, we derive a determinantal expression of  Fibonacci and Lucas numbers. We prove Binet's formula for Fibonacci and Lucas numbers in a purely combinatorial way and in course of doing so, we find a  determinantal identity, which we think to be new. 
\end{abstract}
\section{Introduction}
This paper contributes to the study of combinatorial proofs of some recent identities due to Sury and McLaughlin, determinantal formulas of general linear recurrence with constant coefficients in a unified way. Combinatorial proofs give more insight into  ``why" the result is true rather than ``how" \cite{18, 14, SSDISC, 11}. 

It is not always possible to find a closed form expression for an arbitrary term in the sequence of a recurrence relation with initial conditions. Many of our favorite number sequences, such as Fibonacci numbers and their generalizations, Lucas numbers, are precisely these. Each has beautiful combinatorial interpretations using tilling of a board \cite{24}.  Fibonacci and Lucas numbers are defined by a second order linear recurrence with coefficients of $1$ with special initial conditions. There are many different proofs of Binet's formula for Fibonacci and Lucas numbers \cite{ 25, 29}. In particular, in \cite{29}, a combinatorial proof using a random tiling of an infinite board with squares and dominoes can be used to explain Binet's formula and its generalization for arbitrary initial conditions. In this paper, we prove that the solution of a general linear recurrence with constant coefficient can be expressed as a determinant in a purely combinatorial way and consequently we prove the Binet's formula regarding Fibonacci and Lucas numbers. Our combinatorial approach also yields some recent identities due to Sury and McLaughlin.   

Let us briefly summarize the content of this paper. In Section $2,$ we  give a bijective proof of an identity regarding elementary and homogeneous symmetric polynomials and as a corollary we give bijective proof of some recent identities due to Sury and McLaughlin. In Section $3,$ we prove that the solution of a general linear recurrence with constant coefficients can be interpreted as a determinant of some matrix. We prove Binet's formula for Fibonacci numbers in a combinatorial way. In Section $4,$ we derive a new combinatorial identity and as a corollary, we get Binet's formula for Lucas numbers.  
\section{ combinatorial explanation of Sury and McLaughlin  identities}
In this section, we give combinatorial proof of some recent identities due to Sury and McLaughlin. Before getting into that, we develop some necessary background.  For details we refer \cite{16,21}. Let $A=(a_{ij})_{n\times n}$ be a  matrix. Now we associate a weighted digraph $D(A),$ (with $A$) whose vertex set is $[n]=\{1, 2, \cdots, n\}$ and for each ordered pair $(i, j),$ there is an edge directed from $i$ to $j$ with weight $a_{ij}.$ A \emph{linear subdigraph} (LSD) $L,$ of $D(A)$ is a spanning collection of pairwise vertex-disjoint cycles. A loop around a single vertex is also considered to be a cycle. The \emph{weight} of a linear subdigraph $L,$ written as $w(L),$ is the product of the weights of all its cycles. The weight of a cycle is the product of weights of all its edges. The \emph{length} of a cycle is the number of edges  present in that cycle. The number of cycles contained in $L$ is denoted by $c(L).$ Now the cycle-decomposition of permutations yields the following description of $\det(A),$ namely  $\det(A)=\sum\limits_{L}(-1)^{n-c(L)}w(L),$ where the summation runs over all linear subdigraphs $L$ of $D(A).$ A \emph{partition} $\lambda$ of a positive integer $m$ is a weakly decreasing finite sequence $(\lambda_1,\cdots,\lambda_r)$ of non-negative integers such that $\sum\limits_{i=1}^{r}\lambda_i=m.$
We denote $\lambda\vdash m$ to mean $\lambda$ is a partition of $m$. For $n$ variables $x_1,\cdots, x_n$, the elementary symmetric polynomial $e_k(x_1,\cdots, x_n),$ (in short, $e_k$) of degree $k \geq 0,$ is defined as \[e_k(x_1,\cdots, x_n):=\sum\limits_{1\leq j_1<j_2\cdots<j_k\leq n}x_{j_1}x_{j_2}\cdots x_{j_k}, \text{ and } e_0=1.\] The complete homogeneous symmetric polynomial $h_k(x_1,\cdots, x_n),$ (in short, $h_k$) of degree $k\geq 0,$ in $n$ variables $x_1, \cdots, x_n,$  is the sum of all monomials of total degree $k.$  Formally,
\[h_k(x_1,\cdots, x_n):=\sum\limits_{1\leq j_1\leq j_2\cdots \leq j_k\leq n}x_{j_1}x_{j_2}\cdots x_{j_k}, \text{ and } h_0=1.\] Let $\lambda = (\lambda_1, \cdots, \lambda_{\ell})$ be a partition and $\delta=(\ell-1, \ell-2, \cdots, 1, 0),$  where $\lambda_1\geq \cdots\geq \lambda_{\ell}$ and each $\lambda_j$ is a non-negative integer. Then the functions 
\[a_{\lambda+\delta}(x_1, \cdots, x_{\ell})=\det \left(
\begin{array}{cccc}
x_1^{\lambda_1+\ell-1} & x_2^{\lambda_1+\ell-1}& \cdots & x_{\ell}^{\lambda_1+\ell-1}\\ 
x_1^{\lambda_2+\ell-2} & x_2^{\lambda_2+\ell-2}& \cdots & x_{\ell}^{\lambda_2+\ell-2} \\
\vdots & \vdots & \ddots &\vdots \\
x_1^{\lambda_{\ell}} & x_2^{\lambda_{\ell}}& \cdots & x_{\ell}^{\lambda_{\ell}}
\end{array}
\right)\] are alternating polynomials. The Schur polynomials are defined as the ratio \[\frac{a_{\lambda+\delta}(x_1, \cdots, x_{\ell})}{a_{\delta}(x_1, \cdots, x_{\ell})}.\]

Let $e_1, \cdots, e_m$ be elementary symmetric polynomials in the variables $x_1, \cdots, x_n.$  We consider a $m\times m$ matrix
\begin{equation}\label{ele hom matrix}
E(e_1, \cdots, e_m)= \left(
\begin{array}{cccccc}
e_1 & e_2 & e_3 & \cdots & e_{m-1} & e_m\\
1& e_1 & e_2 & \cdots & e_{m-2} & e_{m-1} \\
0 & 1 & e_1 &\cdots  &  e_{m-3} & e_{m-2} \\
\vdots & \vdots & \vdots &\ddots & \vdots & \vdots \\
0 & 0 & 0 & \cdots & e_1 & e_2\\ 
0 & 0 & 0 & \cdots & 1 & e_1\\
\end{array}
\right).
\end{equation}
\begin{theorem}[Theorem $4.5.1$ \cite{21}]\label{ele and hom relation}
Let $e_1, \cdots, e_m  $ be elementary symmetric polynomials in the variables  $x_1, x_2, \cdots, x_n.$ Let $E(e_1, \cdots, e_m)$ be an $m\times m$ matrix defined as \eqref{ele hom matrix}. Then $\det(E(e_1, \cdots, e_m))=h_m(x_1, \cdots, x_n).$
\end{theorem}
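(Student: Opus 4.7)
The plan is to apply the weighted-digraph expansion $\det(E)=\sum_{L}(-1)^{m-c(L)}w(L)$ and to identify the linear subdigraphs of $D(E)$ explicitly. The digraph $D(E)$ has only three types of nonzero edges: loops $i\to i$ of weight $e_1$, subdiagonal edges $i\to i-1$ of weight $1$, and forward edges $i\to j$ (with $j\ge i$) of weight $e_{j-i+1}$. The key structural observation is that the only edges which decrease the vertex index do so by exactly one; hence in any cycle, once we are at its maximum vertex $M$ the next vertex must be $M-1$, then $M-2$, and by induction the cycle descends one step at a time until it closes. Every cycle therefore occupies a consecutive block $\{a,a+1,\dots,a+\ell-1\}$, takes the unique form $a+\ell-1\to a+\ell-2\to\cdots\to a\to a+\ell-1$, and carries total weight $e_\ell$ (contributed entirely by the single ``long'' edge $a\to a+\ell-1$).

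Consequently the linear subdigraphs of $D(E)$ are in natural bijection with compositions $(\ell_1,\dots,\ell_t)$ of $m$, and
\[
\det(E)=\sum_{(\ell_1,\dots,\ell_t)\models m}(-1)^{m-t}\prod_{j=1}^{t}e_{\ell_j}.
\]
I then expand each $e_{\ell_j}=\sum_{|B_j|=\ell_j}\prod_{i\in B_j}x_i$, recasting the right-hand side as a signed sum over \emph{decorated compositions} $(B_1,\dots,B_t)$, i.e.\ ordered tuples of nonempty subsets of $[n]$ with $\sum|B_j|=m$. Writing each $B_j$ in increasing order and concatenating associates to such a tuple a pair $(\mathbf{a},S)$ with $\mathbf{a}=(a_1,\dots,a_m)\in[n]^m$ and $S\subseteq\{1,\dots,m-1\}$ (the positions separating consecutive blocks), subject to the constraint $a_i<a_{i+1}$ whenever $i\notin S$.

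For a fixed sequence $\mathbf{a}$, the admissible $S$ are exactly those containing $T(\mathbf{a}):=\{i:a_i\ge a_{i+1}\}$, and a routine binomial-sum evaluation yields
\[
\sum_{S\supseteq T(\mathbf{a})}(-1)^{m-1-|S|}=\begin{cases}1 & \text{if } T(\mathbf{a})=\{1,\dots,m-1\},\\ 0 & \text{otherwise.}\end{cases}
\]
Only the weakly decreasing sequences $\mathbf{a}$ survive, each with coefficient $+1$; since these are in obvious bijection (by reordering) with the distinct monomials of degree $m$ in $x_1,\dots,x_n$, the sum collapses to $h_m$ as required.

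The main obstacle is the structural classification of cycles in $D(E)$: one needs the ``descend-by-one after the maximum'' argument to force each cycle onto a consecutive interval, after which the remainder is a standard inclusion-exclusion/sign-reversing argument over compositions.
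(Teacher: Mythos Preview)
Your proof is correct and shares its core with the paper's: both expand $\det(E)$ via linear subdigraphs, recognize that every cycle in $D(E)$ occupies a consecutive interval $\{a,\dots,a+\ell-1\}$ and carries weight $e_\ell$, and then pass to $h_m$ by inclusion--exclusion. The presentations differ in direction and idiom, however. The paper starts on the $h_m$ side, interpreting $h_m$ as the sum of all length-$m$ words over $\{x_1,\dots,x_n\}$ in which no descent $x_jx_i$ (with $j>i$) occurs; it then computes that sum by PIE over the positions of forbidden descent pairs and exhibits an explicit weight- and sign-preserving bijection between the resulting PIE terms and the LSDs of $D(E)$, illustrated pictorially for $m=5$, $n=3$. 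You instead start on the determinant side, pass to compositions, expand each $e_{\ell_j}$, and carry out the cancellation algebraically over pairs $(\mathbf a,S)$. Your route has the advantage of being written out in full generality and not relying on an example-driven bijection; the paper's route has the advantage of making the underlying sign-reversing involution visible as a concrete word--LSD correspondence.
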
 
\begin{figure}[ht!]
	\tiny
	\tikzstyle{ver}=[]
	\tikzstyle{vert}=[circle, draw, fill=black!100, inner sep=0pt, minimum width=4pt]
	\tikzstyle{vertex}=[circle, draw, fill=black!.5, inner sep=0pt, minimum width=4pt]
	\tikzstyle{edge} = [draw,thick,-]
	\tikzstyle{node_style} = [circle,draw=blue,fill=blue!20!,font=\sffamily\Large\bfseries]
	\centering
	\tikzset{->-/.style={decoration={
				markings,
				mark=at position #1 with {\arrow{>}}},postaction={decorate}}}
\begin{tikzpicture}[scale=1]
	\tikzstyle{edge_style} = [draw=black, line width=2mm, ]
	\tikzstyle{node_style} = [draw=blue,fill=blue!00!,font=\sffamily\Large\bfseries]
	 \tikzset{
	 	LabelStyle/.style = { rectangle, rounded corners, draw,
	 		minimum width = 2em, fill = yellow!50,
	 		text = red, font = \bfseries },
	 	VertexStyle/.append style = { inner sep=5pt,
	 		font = \Large\bfseries},
	 	EdgeStyle/.append style = {->, bend left} }
	 \tikzset{vertex/.style = {shape=circle,draw,minimum size=1.5em}}
	 \tikzset{edge/.style = {->,> = latex'}}
	%
\node (B1) at (-2.3,0) {$\bf{(x_1+x_2+x_3)^5}$}; 
 \draw[<->, line width=.2 mm] (-.5,0) -- (.2,0);
 \draw[](2,-.5) circle (.5);
 \draw[] (2.5,-.5) -- (2.6,-.3);
 \draw[] (2.5,-.5) -- (2.3,-.3);
 \node (B1) at (2,-1.2)   {$\bf{1}$};
 \fill[black!100!](2,-1) circle (.05);
 \node (B1) at (2,.5)   {$\bf{\sum\limits_{i=1}^3 x_i}$};
 \draw[](4,-.5) circle (.5);
 \node (B1) at (4,-1.2)   {$\bf{2}$};
 \fill[black!100!](4,-1) circle (.05);
 \node (B1) at (4, .5) {$\bf{\sum\limits_{i=1}^3 x_i}$};
 \draw[] (4.5,-.5) -- (4.6,-.3);
 \draw[] (4.5,-.5) -- (4.3,-.3);
 \draw[](5.5,-.5) circle (.5);
 \draw[] (6,-.5) -- (6.1,-.3);
 \draw[] (6,-.5) -- (5.8,-.3);
 \node (B1) at (5.5, -1.2)   {$\bf{3}$};
 %
 \fill[black!100!](5.5,-1) circle (.05);
 \node (B1) at (5.5,.5)   {$\bf{\sum\limits_{i=1}^3 x_i}$};
 \draw[](8,-.5) circle (.5);
 \draw[] (8.5,-.5) -- (8.6,-.3);
 \draw[] (8.5,-.5) -- (8.3,-.3);
 \node (B1) at (8,-1.2)   {$\bf{4}$};
 \fill[black!100!](8,-1) circle (.05);
 \node (B1) at (8,.5)   {$\bf{\sum\limits_{i=1}^3 x_i}$};
 \draw[](10,-.5) circle (.5);
 \node (B1) at (10,-1.2)   {$\bf{5}$};
 \fill[black!100!](10,-1) circle (.05);
 \node (B1) at (10, .5) {$\bf{\sum\limits_{i=1}^3 x_i}$};
 \draw[] (10.5,-.5) -- (10.6,-.3);
 \draw[] (10.5,-.5) -- (10.3,-.3);
\node (B1) at (-2.3,-3.5)   {$\bf{\sum\limits_{ i, j\in [3]  i<j } x_jx_i(x_1+x_2+x_3)^3}$}; 
\draw[<->, line width=.2 mm] (0.2,-3.5) -- (.8,-3.5);
\node[] (a) at  (2,-3.5) {$\bf{1}$};
\node[] (b) at  (4,-3.5) {$\bf{2}$};
\fill[black!100!](2.2,-3.5) circle (.05);
\fill[black!100!](3.8,-3.5) circle (.05);
\draw[edge] (a)  to[bend left] (b);
\draw[edge] (b)  to[bend left] (a);
\node (B1) at (3, -2.5)   {$\bf{\sum\limits_{ i, j\in [3]  i<j } x_jx_i}$};
\draw[](5.5,-3.5) circle (.5);
\draw[] (6,-3.5) -- (6.1,-3.3);
\draw[] (6,-3.5) -- (5.8,-3.3);
\node (B1) at (5.5, -4.2)   {$\bf{3}$};
%
\fill[black!100!](5.5,-4) circle (.05);
\node (B1) at (5.5,-2.5)   {$\bf{\sum\limits_{i=1}^3 x_i}$};
\draw[](8,-3.5) circle (.5);
\draw[] (8.5,-3.5) -- (8.6,-3.3);
\draw[] (8.5,-3.5) -- (8.3,-3.3);
\node (B1) at (8,-4.2)   {$\bf{4}$};
\fill[black!100!](8,-4) circle (.05);
\node (B1) at (8,-2.5)   {$\bf{\sum\limits_{i=1}^3 x_i}$};
\draw[](10,-3.5) circle (.5);
\node (B1) at (10,-4.2)   {$\bf{5}$};
\fill[black!100!](10,-4) circle (.05);
\node (B1) at (10, -2.5) {$\bf{\sum\limits_{i=1}^3 x_i}$};
\draw[] (10.5,-3.5) -- (10.6,-3.3);
\draw[] (10.5,-3.5) -- (10.3,-3.3);
\node (B1) at (-2.3,-6.5)   {$\bf{x_3x_2x_1(x_1+x_2+x_3)^2}$};
\draw[<->, line width=.2 mm] (0.3,-6.5) -- (.9,-6.5);
\node[] (c) at  (2,-6.5) {$\bf{1}$};
\node[] (d) at  (4,-6.5) {$\bf{2}$};
\node[] (e) at  (6,-6.5) {$\bf{3}$};
\fill[black!100!](2.2,-6.5) circle (.05);
\fill[black!100!](4.2,-6.5) circle (.05);
\fill[black!100!](5.8,-6.5) circle (.05);
\draw[edge] (c)  to[bend left] (e);
\draw[edge] (e)  to[bend left] (d);
\draw[edge] (d)  to[bend left] (c);
 \node (B1) at (4, -5.5)   {$\bf{x_3x_2x_1}$}; 
\node (B1) at (3, -6.6)   {$\bf{1}$}; 
\node (B1) at (5, -6.6)   {$\bf{1}$}; 
\draw[](8, -6.5) circle (.5);
\node (B1) at (8,-5.5)   {$\bf{\sum\limits_{i=1}^3 x_i}$};
\draw[] (8.5,-6.5) -- (8.6,-6.3);
\draw[] (8.5,-6.5) -- (8.3,-6.3);
\node (B1) at (8,-7.2)   {$\bf{4}$};
\fill[black!100!](8,-7) circle (.05);
%
\draw[](10,-6.5) circle (.5);
\draw[] (10.5, -6.5) -- (10.6,-6.3);
\draw[] (10.5,-6.5) -- (10.3,-6.3);
\node (B1) at (10,-7.2)   {$\bf{5}$};
\fill[black!100!](10,-7) circle (.05);
\node (B1) at (10, -5.5) {$\bf{\sum\limits_{i=1}^3 x_i}$};
\node (B1) at (-2.3,-9) {$\bf{(x_1+x_2+x_3)^2x_3x_2x_1}$}; 
\draw[<->, line width=.2 mm] (-.5,-9) -- (.2,-9);
\draw[](2,-9) circle (.5);
\draw[] (2.5,-9) -- (2.6,-8.8);
\draw[] (2.5,-9) -- (2.3,-8.8);
\node (B1) at (2,-9.7)   {$\bf{1}$};
\fill[black!100!](2,-9.5) circle (.05);
\node (B1) at (2,-8)   {$\bf{\sum\limits_{i=1}^3 x_i}$};
\draw[](4,-9) circle (.5);
\node (B1) at (4,-9.7)   {$\bf{2}$};
\fill[black!100!](4,-9.5) circle (.05);
\node (B1) at (4, -8) {$\bf{\sum\limits_{i=1}^3 x_i}$};
\draw[] (4.5,-9) -- (4.6,-8.8);
\draw[] (4.5,-9) -- (4.3,-8.8);
\node[] (c) at  (6,-9) {$\bf{3}$};
\node[] (d) at  (8,-9) {$\bf{4}$};
\node[] (e) at  (10,-9) {$\bf{5}$};
\fill[black!100!](6.2,-9) circle (.05);
\fill[black!100!](8.2,-9) circle (.05);
\fill[black!100!](9.8,-9) circle (.05);
\draw[edge] (c)  to[bend left] (e);
\draw[edge] (e)  to[bend left] (d);
\draw[edge] (d)  to[bend left] (c);
\node (B1) at (8, -8)   {$\bf{x_3x_2x_1}$}; 
\node (B1) at (7, -9.1)   {$\bf{1}$}; 
\node (B1) at (9, -9.1)   {$\bf{1}$};
%
\end{tikzpicture}
\caption{The numbers appearing on the edges in the above diagram are the weights of the corresponding edges. The left hand side of the figure contains some terms of PIE expression.}
\label{fig:f1}	
\end{figure}
\begin{proof}
 We prove the theorem for the case $m=5, n=3$ (similarly  we can prove the general case). Think $W(x_1, x_2, x_3)=\{x_1, x_2,  x_3\}$ to be the set of letters. The \emph{free monoid} $W(x_1, x_2, x_3)^*$ is the set of all finite sequences (including the empty sequence, denoted by $1$) of elements of $W(x_1, x_2, x_3),$ usually called \emph{linear words}, with the operation of concatenation. Construct an algebra  from $W(x_1, x_2, x_3)^*$ by taking formal sum of elements of $W(x_1, x_2, x_3)$ with coefficient in $\mathbb{Z},$ extending the multiplication by usual distributivity. For example, in this algebra, $(x_{1}+x_{2}+x_{3})(x_{1}+x_{2}+x_{3})$ [written as $(x_{1}+x_{2}+x_{3})^2]= x_{1}x_{1}+x_{1}x_{2}+x_{1}x_{3}+x_{2}x_{1}+x_{2}x_{2}+x_{2}x_{3}+x_{3}x_{1}+x_{3}x_{2}+x_{3}x_{3}; (1+x_{3})x_{2}=x_{2}+x_{3}x_{2}$ etc.  
 
 Let $W_L$ be the sum of all linear words in $W(x_1, x_2, x_3)^*$ of length $5,$ where the letter $x_2$  does not occur just before the letter $x_1$ and the letter $x_3$ does not occur just before the letters $x_1$ and $x_2$ i.e. $x_2x_1$, $x_3x_1$ and $x_3x_2$ does not occur in the words as a consecutive pair. So, \[W_L=x_1x_1x_1x_1x_1+x_1x_1x_1x_1x_2+\cdots+x_2x_2x_2x_2x_2+x_1x_1x_1x_1x_3+\cdots+x_3x_3x_3x_3x_3.\] Now we compute $W_L$ by using the \emph{Principle of Inclusion and Exclusion (PIE)} rule.
 
  The sum of all possible words of length $5$ is $(x_1+x_2+x_3)^5.$ This can be written as  \[(x_1+x_2+x_3)(x_1+x_2+x_3)(x_1+x_2+x_3)(x_1+x_2+x_3)(x_1+x_2+x_3).\] The sum of all possible words of length $5,$ where there is an occurrence of $x_2x_1$  at least as the first two letters is $x_2x_1(x_1+x_2+x_3)^3.$ Similarly, the sum of all possible words of length $5$ where there is an occurrence of $x_3x_2$ and $x_3x_1$ at least as the  first two letters is   is $x_3x_2(x_1+x_2+x_3)^3$  and  $x_3x_1(x_1+x_2+x_3)^3$ respectively. Again the sum of all possible words of length $5,$ where there is an occurrence of $x_3x_2x_1$  at least as the first three letters is $x_3x_2x_1(x_1+x_2+x_3)^2.$  Proceeding  this way and using the PIE rule we get,
\begin{align*}
W_L=&(x_1+x_2+x_3)^5-x_2x_1(x_1+x_2+x_3)^3-x_3x_1(x_1+x_2+x_3)^3-x_3x_2(x_1+x_2+x_3)^3-\\&(x_1+x_2+x_3)x_2x_1(x_1+x_2+x_3)^2 -(x_1+x_2+x_3)x_3x_1(x_1+x_2+x_3)^2-\\&(x_1+x_2+x_3)x_3x_2(x_1+x_2+x_3)^2+\cdots+x_3x_2x_1(x_1+x_2+x_3)^2 +\\&(x_1+x_2+x_3)x_3x_2x_1(x_1+x_2+x_3)+(x_1+x_2+x_3)^2x_3x_2x_1.
\end{align*}
Now the terms appearing in the above PIE expression of $W_L$ are $(x_1+x_2+x_3)^5, x_2x_1(x_1+x_2+x_3)^3, x_3x_1(x_1+x_2+x_3)^3, x_3x_2(x_1+x_2+x_3)^3, \cdots,  x_3x_2x_1(x_1+x_2+x_3)^2, (x_1+x_2+x_3)x_3x_2x_1(x_1+x_2+x_3), (x_1+x_2+x_3)^2x_3x_2x_1 $ and signs of the  corresponding terms are $+, -, -, -, \cdots, +, +, +$ respectively. We define the weight of a word  $w$ in $W(x_1, x_2, x_3)^*$ is the homomorphic image $f(w),$ where $f: W(x_1, x_2, x_3)^*\rightarrow \mathbb{Z}[x_1, x_2, x_3]$ is a ring homomorphism, with $f(x_1)=x_1$, $f(x_2)=x_2$ and $f(x_3)=x_3.$ Now Figure \ref{fig:f1} illustrates a bijection between terms appear in PIE expression and linear subdigraphs of $D(E(e_1, \cdots, e_m)),$ with equal weights and signs on both sides. What we really mean by this, is that the weight of the linear subdigraph in the right hand side of Figure \ref{fig:f1} is the image of the corresponding word in the left hand side, under the ring homomorphism $f: W(x_1, x_2, x_3)^*\rightarrow \mathbb{Z}[x_1, x_2, x_3].$
\end{proof}
 Using the same combinatorial model used in the above proof, we now re derive some interesting recent identities of Sury and McLaughlin \cite{26, 27}. It is to be noted that, the original proofs of those identities involve to some extent cumbersome algebraic manipulation, whereas our approach is purely combinatorial.   
\begin{corollary}[Theorem $1$ \cite{26}]\label{iden1}\label{coro 1}
Let $x_1, \cdots, x_k$ be independent variables. Let $e_1, \cdots, e_k$ denote the various elementary symmetric polynomials in the $x_i"s$ of degrees $1, 2, \cdots, k$ respectively. Then in the polynomial ring $K[x_1, \cdots, x_k], (K$ is a field of characteristic  $0$) for each positive integer $n,$ one has the identity
\begin{align*}
&\sum\limits_{r_1+r_2+\cdots+r_k=n, r_i\geq 0}x_1^{r_1}x_2^{r_2}\cdots x_k^{r_k}\\
&=\sum\limits_{2i_2+3i_3+\cdots+ki_k\leq n}c(i_2, \cdots, i_k, n)e_1^{n-2i_2-3i_3-\cdots-ki_k}\times (-e_2)^{i_2}\times  e_3^{i_3}\times \cdots\times  ((-1)^{k-1}e_k)^{i_k},
\end{align*} 
where \[c(i_2, \cdots, i_k, n)=\frac{(n-i_2-2i_3-\cdots-(k-1)i_k)!}{i_2!\cdots i_k!(n-2i_2-3i_3-\cdots-ki_k))!}.\]   	
\end{corollary}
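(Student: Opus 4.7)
The plan is to derive the corollary directly from Theorem~\ref{ele and hom relation} by expanding the determinant $\det(E(e_1,\dots,e_n))$ as a sum over linear subdigraphs, classifying those that survive, and then grouping them by cycle type to match Sury's right-hand side.

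First, I recognize the left-hand side $\sum x_1^{r_1}\cdots x_k^{r_k}$ (over $r_1+\cdots+r_k=n$) as the complete homogeneous symmetric polynomial $h_n(x_1,\dots,x_k)$. Applying Theorem~\ref{ele and hom relation} with $m=n$ gives $h_n=\det(E(e_1,\dots,e_n))=\sum_L(-1)^{n-c(L)}w(L)$, where $L$ ranges over linear subdigraphs of the digraph $D(E(e_1,\dots,e_n))$.

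Second, I classify the cycles of nonzero weight in $D(E(e_1,\dots,e_n))$. Its edges are a loop of weight $e_1$ at each vertex $i$, an edge $i\to j$ of weight $e_{j-i+1}$ for each $j>i$, and an edge $i\to i-1$ of weight $1$ for each $i\ge 2$; all other entries vanish. Since the only downward edges drop the index by exactly one, a short predecessor-chasing argument starting from the minimum vertex of a cycle $C$ forces the vertex set of $C$ to be a consecutive interval $\{a,a+1,\dots,a+\ell-1\}$ and the cycle itself to be the staircase $a\to a+\ell-1\to a+\ell-2\to\cdots\to a+1\to a$ of length $\ell$ and weight $e_\ell$ (with the $\ell=1$ case being the loop at $a$). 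Consequently, each nonzero linear subdigraph is encoded by a composition $(\ell_1,\dots,\ell_t)$ of $n$ with parts at most $k$ (since $e_s=0$ for $s>k$), and contributes $(-1)^{n-t}e_{\ell_1}\cdots e_{\ell_t}$.

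Third, I collect compositions sharing the same type. Let $i_j$ denote the number of parts equal to $j$, so $i_1+2i_2+\cdots+ki_k=n$ and $t=i_1+\cdots+i_k$. The number of compositions of this type is the multinomial $\binom{t}{i_1,\dots,i_k}$, and each contributes the common signed weight $(-1)^{n-t}e_1^{i_1}e_2^{i_2}\cdots e_k^{i_k}$. Solving $i_1=n-2i_2-\cdots-ki_k$ gives $t=n-i_2-2i_3-\cdots-(k-1)i_k$, so this multinomial equals $c(i_2,\dots,i_k,n)$. Moreover, the sign $(-1)^{n-t}=(-1)^{i_2+2i_3+\cdots+(k-1)i_k}$ factors as $\prod_{j=1}^k(-1)^{(j-1)i_j}$, which is exactly the sign produced on the right-hand side by the factors $((-1)^{j-1}e_j)^{i_j}$. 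Matching the two expressions term by term yields the identity.

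The real obstacle is the cycle classification in step two: one must rule out exotic cycles that interleave upward jumps with downward unit steps on non-consecutive vertex sets. This is resolved by the simple observation that the only way to reach a strictly smaller vertex is via a single down-by-one step, so the minimum vertex of a cycle forces its predecessors to fill in the consecutive vertices above it. Steps one and three are then routine bookkeeping.
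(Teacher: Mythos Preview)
Your proposal is correct and follows essentially the same route as the paper: identify the left side as $h_n$, invoke Theorem~\ref{ele and hom relation} for the matrix $E(e_1,\dots,e_k,0,\dots,0)$, expand the determinant over linear subdigraphs, and group the surviving LSDs by cycle type to obtain the multinomial coefficient $c(i_2,\dots,i_k,n)$ together with the sign $(-1)^{i_2+2i_3+\cdots+(k-1)i_k}$. The only difference is cosmetic: you spell out the predecessor-chasing argument that forces every nonzero cycle to be a staircase on a consecutive block of vertices, whereas the paper takes this structure of the LSDs for granted.
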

\begin{proof}
We consider the $n\times n$ matrix  \[E(e_1,\cdots, e_k, 0\cdots, 0)=\left(
\begin{array}{ccccccccc}
e_1&e_2&e_3&\cdots&e_k&0&\cdots&0&0\\
1&e_1&e_2&\cdots&e_{k-1}&e_k&\cdots&0&0\\
0&1&e_1&\cdots&e_{k-2}&e_{k-1}&\cdots&0&0\\
\vdots&\vdots&\vdots&\ddots&\vdots&\vdots&\ddots&\vdots&\vdots\\
0&0&0&\cdots&e_1&e_2&\cdots&e_{n-k}&e_{n-k+1}\\
0&0&0&\cdots&1&e_1&\cdots&e_{n-k-1}&e_{n-k}\\
\vdots&\vdots&\vdots&\ddots&\vdots&\vdots&\ddots&\vdots&\vdots\\
0&0&0&\cdots&0&0&\cdots& e_1&e_2\\
0&0&0&\cdots&0&0&\cdots&1&e_1
\end{array}
\right).\] By Theorem \ref{ele and hom relation}, we get
\begin{equation}\label{hom eqn1}
 \det(E(e_1,\cdots, e_k, 0\cdots, 0))=\sum\limits_{r_1+r_2+\cdots+r_k=n, r_i\geq 0 }x_1^{r_1}x_2^{r_2}\cdots x_k^{r_k}. 
\end{equation}
 We know that
\begin{equation}\label{hom eqn2}
\det(E(e_1,\cdots, e_k, 0\cdots, 0))=\sum\limits_{L}(-1)^{n-c(L)}w(L),
 \end{equation}
 \text{ where the sumation runs over all LSD  $L$ of the digraph   } $D(E(e_1,\cdots, e_k, 0\cdots, 0)).$ Clearly, each LSD of $D(E(e_1,\cdots, e_k, 0\cdots, 0))$  contains cycles of length at most $k.$ Suppose $L$ be an arbitrary LSD, and $L$ contains $i_{t}$ many cycles of length $t (t=2, 3, \cdots, k).$ So $L$ contains $(n-2i_2-3i_3-\cdots-ki_k)$ many loops. Now $(-1)^{n-c(L)}=(-1)^{i_2+2i_3+3i_4+\cdots+(k-1)i_k}.$ Again, weight of the LSD, $L$ is \[e_1^{n-2i_2-3i_3-\cdots-ki_k}\times (e_2)^{i_2}\times e_3^{i_3}\times \cdots\times  (e_k)^{i_k}\] 
 and  \[c(i_2, \cdots, i_k, n)=\frac{(n-i_2-2i_3-\cdots-(k-1)i_k)!}{i_2!\cdots i_k!(n-2i_2-3i_3-\cdots-ki_k))!},\] is the number of LSD $L,$ containing $n-2i_2-3i_3-\cdots-ki_k$ many loops and $i_{t} (t=2, 3, \cdots, k)$ many cycles of length $t.$ Now putting the values of weight and sign of each LSD in Equation \eqref{hom eqn2}, we get 
 \begin{align*}\label{hom eqn3}
 \det(E(&e_1, \cdots, e_k, 0, \cdots, 0))\\=&\sum\limits_{2i_2+3i_3+\cdots+ki_k\leq n}c(i_2, \cdots, i_k, n)e_1^{n-2i_2-3i_3-\cdots-ki_k}\times (-e_2)^{i_2}\times  e_3^{i_3}\times \cdots\times  ((-1)^{k-1}e_k)^{i_k}.	
 \end{align*}
 Hence the identity.	   
 \end{proof}
 \begin{corollary}[Theorem $3$ \cite{26}]
Let $n$ be a positive integer and $x, y, z$ be indeterminates. Then 
\begin{align*}
&\sum\limits_{2i+3j\leq n}(-1)^i\binom{i+j}{j}\binom{n-i-2j}{i+j}(x+y+z)^{n-2i-3j}(xy+yz+zx)^i(xyz)^j\\
&=\frac{xy(x^{n+1}-y^{n+1})-xz(x^{n+1}-z^{n+1})+yz(y^{n+1}-z^{n+1})}{(x-y)(x-z)(y-z)}.
\end{align*}	
\end{corollary}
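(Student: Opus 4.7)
The plan is to specialize Corollary~\ref{coro 1} to $k=3$ with $x_1=x$, $x_2=y$, $x_3=z$, reducing the left-hand side to the complete homogeneous polynomial $h_n(x,y,z)$, and then to evaluate $h_n(x,y,z)$ in closed form via the bialternant formula for the Schur polynomial $s_{(n)}(x,y,z)$, which equals $h_n(x,y,z)$ classically.

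First, setting $k=3$ in Corollary~\ref{coro 1} gives $e_1 = x+y+z$, $e_2 = xy+yz+zx$, and $e_3 = xyz$. Writing $i := i_2$ and $j := i_3$, the sign $(-1)^{k-1}=+1$ on $e_3$ leaves $(xyz)^j$ unsigned while $(-e_2)^{i_2}$ contributes $(-1)^i$, matching the target. A routine manipulation gives
\[
\frac{(n-i-2j)!}{i!\,j!\,(n-2i-3j)!} \;=\; \binom{i+j}{j}\binom{n-i-2j}{i+j}.
\]
So the left-hand side of the claimed identity equals $\sum_{r_1+r_2+r_3=n} x^{r_1}y^{r_2}z^{r_3} = h_n(x,y,z)$.

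Second, to obtain the closed form, I apply $s_{(n)}(x,y,z)=h_n(x,y,z)$ with $\lambda=(n,0,0)$ and $\delta=(2,1,0)$, using the bialternant quotient recalled in Section~2. The denominator $a_\delta(x,y,z)$ equals $(x-y)(x-z)(y-z)$ up to sign (coming from reversing the row order in the standard Vandermonde), and expanding $a_{\lambda+\delta}$ along its first row yields
\[
a_{\lambda+\delta}(x,y,z) \;=\; \det\begin{pmatrix} x^{n+2} & y^{n+2} & z^{n+2} \\ x & y & z \\ 1 & 1 & 1 \end{pmatrix} \;=\; x^{n+2}(y-z) - y^{n+2}(x-z) + z^{n+2}(x-y).
\]
Finally, regrouping $x^{n+2}y - xy^{n+2} = xy(x^{n+1}-y^{n+1})$, and similarly for the other pairs, rewrites this numerator as $xy(x^{n+1}-y^{n+1}) - xz(x^{n+1}-z^{n+1}) + yz(y^{n+1}-z^{n+1})$, which matches the stated right-hand side exactly.

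The main (and essentially only) subtlety is bookkeeping signs when identifying $a_\delta$ with the Vandermonde $\prod_{i<j}(x_j-x_i)$ and ensuring that no global sign is introduced when passing between the two determinantal numerators above. Once that is handled, the proof is a direct specialization of Corollary~\ref{coro 1} together with a standard Schur identity, with no genuinely new combinatorial input required beyond the bijective argument already used to establish Theorem~\ref{ele and hom relation}.
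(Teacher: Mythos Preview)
Your proposal is correct and follows essentially the same approach as the paper: both identify the right-hand side as the Schur polynomial $s_{(n)}(x,y,z)=h_n(x,y,z)$ via the bialternant quotient, and both obtain the left-hand side from the LSD expansion of the determinant in Theorem~\ref{ele and hom relation}. The only cosmetic difference is that you invoke Corollary~\ref{coro 1} directly (and explicitly verify the multinomial-to-binomial rewriting and the numerator regrouping), whereas the paper reruns the determinant-and-LSD argument from scratch; the underlying mathematics is identical.
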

\begin{proof}
Suppose $\lambda=(n)$ be a partition of $n.$ Then it is clear that the right hand side of this identity is the Schur polynomial \[s_{\lambda}(x, y, z)=\frac{a_{\lambda+\delta}(x, y, z)}{a_{\delta}(x, y, z)}, \text{ where }\]
\begin{align*}
a_{\lambda+\delta}(x, y, z)=\det\left(
\begin{array}{ccc}
x^{n+2}&y^{n+2}&z^{n+2}\\
x&y&z\\
1&1&1
\end{array}
\right) \text{ and }
a_{\delta}(x, y, z)=\det\left(
\begin{array}{ccc}
x^{2}&y^{2}&z^{2}\\
x&y&z\\
1&1&1
\end{array}
\right).
\end{align*} 
Now for the partition $\lambda=(n), s_{\lambda}(x, y, z)$ is the complete homogeneous polynomial of degree $n$ with the variables $x, y, z.$ So by Theorem \ref{ele and hom relation}, \[h_n(x, y, z)=\det\left(
\begin{array}{cccccc}
e_1&e_2&e_3&\cdots&0&0\\
1&e_1&e_2&\cdots&0&0\\
0&1&e_1&\cdots&0&0\\
\vdots&\vdots&\vdots&\ddots&\vdots&\vdots\\
0&0&0&\cdots&e_1&e_2\\
0&0&0&\cdots&1&e_1
\end{array}
\right)=\sum\limits_{L}(-1)^{n-c(L)}w(L),\] where $e_1, e_2, e_3$ are elementary symmetric polynomials in the variables $x, y, z$ and order of the matrix is $n.$ Now applying same argument as in proof of Corollary \ref{coro 1}, we get the identity.
\end{proof}		
\begin{corollary}\label{cor3}
	Let $x, y$ be indeterminates. Then the following polynomial identity holds:  \[\sum\limits_{2i\leq n}(-1)^i \binom{n-i}{i}(x+y)^{n-2i}(xy)^i=x^n+x^{n-1}y+\cdots+xy^{n-1}+y^n\]  	
\end{corollary}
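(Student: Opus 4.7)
The plan is to apply the same combinatorial framework used in Corollary \ref{coro 1}, specialized to the case $k=2$. Take the two variables to be $x$ and $y$, so that $e_1=x+y$ and $e_2=xy$, and consider the $n\times n$ matrix $E(e_1,e_2,0,\ldots,0)$. By Theorem \ref{ele and hom relation}, its determinant equals the complete homogeneous symmetric polynomial $h_n(x,y)=x^n+x^{n-1}y+\cdots+xy^{n-1}+y^n$, which is exactly the right-hand side of the desired identity.

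Next, I would expand this determinant combinatorially using the formula $\det(A)=\sum_L (-1)^{n-c(L)}w(L)$ over linear subdigraphs $L$ of $D(E(e_1,e_2,0,\ldots,0))$. Since $e_k=0$ for $k\geq 3$, every cycle of such an LSD has length at most $2$; the only length-$2$ cycles possible are those between consecutive vertices $j, j+1$, since the superdiagonal carries weight $e_2$ and the subdiagonal carries weight $1$ (all other off-diagonal entries vanish).

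An LSD $L$ with exactly $i$ two-cycles therefore has $n-2i$ loops, giving $c(L)=(n-2i)+i=n-i$, sign $(-1)^{n-c(L)}=(-1)^i$, and weight $e_1^{n-2i}e_2^i=(x+y)^{n-2i}(xy)^i$. The number of such LSDs equals the number of ways to choose $i$ pairwise disjoint pairs of consecutive vertices along the path $1,2,\ldots,n$, which is the classical domino-tiling count $\binom{n-i}{i}$. Summing over all admissible $i$ (i.e.\ $2i\le n$) gives
\begin{equation*}
h_n(x,y)=\sum_{2i\le n}(-1)^i\binom{n-i}{i}(x+y)^{n-2i}(xy)^i,
\end{equation*}
which is the claimed identity.

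The only step that requires any real verification is the count $\binom{n-i}{i}$ for the number of LSDs with $i$ two-cycles; since the $2$-cycles correspond to non-overlapping dominoes on $n$ linearly-arranged vertices, this is a standard combinatorial fact and poses no genuine obstacle. Everything else is a direct specialization of the reasoning already used in Corollary \ref{coro 1}.
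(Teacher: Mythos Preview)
Your proof is correct and follows essentially the same approach as the paper: identify the right-hand side as $h_n(x,y)$, express it via Theorem~\ref{ele and hom relation} as the determinant of the tridiagonal matrix $E(x+y,xy,0,\ldots,0)$, and expand combinatorially over linear subdigraphs exactly as in Corollary~\ref{coro 1}. The paper merely says ``proceeding same way as in proof of Corollary~\ref{coro 1}'' without spelling out the sign, weight, and $\binom{n-i}{i}$ count, so your version is in fact more explicit than the original.
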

\begin{proof}
	Clearly the right hand side of the above identity is complete homogeneous polynomial $h_n(x, y)$ of degree $n.$  Now by Theorem \ref{ele and hom relation},
	\begin{align}\label{bienet}
	 \det\left(
	\begin{array}{cccccc}
	x+y &xy & 0 & \cdots & 0 & 0\\
	1& x+y & xy & \cdots & 0 & 0 \\
	0 & 1 & x+y &\cdots  & 0 & 0 \\
	\vdots & \vdots & \vdots &\ddots & \vdots & \vdots \\
	0 & 0 & 0 & \cdots & x+y & xy\\ 
	0 & 0 & 0 & \cdots & 1 & x+y\\
	\end{array}
	\right)=h_n(x, y),
	\end{align}
	 where the matrix is of order $n.$ Now  \[\det\left(
	\begin{array}{cccccc}
	x+y &xy & 0 & \cdots & 0 & 0\\
	1& x+y & xy & \cdots & 0 & 0 \\
	0 & 1 & x+y &\cdots  & 0 & 0 \\
	\vdots & \vdots & \vdots &\ddots & \vdots & \vdots \\
	0 & 0 & 0 & \cdots & x+y & xy\\ 
	0 & 0 & 0 & \cdots & 1 & x+y\\
	\end{array}
	\right)=\sum\limits_{L}(-1)^{n-c(L)}w(L),\] where the summation runs over all linear subdigraphs $L.$ Now proceeding same way as in proof of Corollary \ref{coro 1}, we get the identity.  
\end{proof}
 \section{determinantal interpretation of general linear recurrences }
 In this section, we prove that the solution of general linear recurrence with constant coefficients can be expressed as a determinant.  
\begin{defn}\label{gen liner recc}
 Let  $c_1, c_2,\cdots, c_r$ be real numbers. Let $u_0, u_1, u_2,\cdots, u_{r-1}$ be sequence of numbers, then for $n\geq 1,$ a $r$-th order linear recurrence is defined by   \[u_n=c_1u_{n-1}+c_2u_{n-2}+\cdots+c_ru_{n-r},\] with initial conditions $u_0=1$ and for $j\leq 0, u_j=0 $.
\end{defn}
There is a more or less well known  combinatorial interpretation of general linear recurrences (see  \cite{24}). In fact, $u_n$ is the sum of weights of all tillings of an $n$-board (a board of length $n$) with tiles of length  at most $r,$ where for $1\leq i\leq r,$ weight of each tile of length $i$ is  $c_i$ and the weight of a tilling is the product of weights of all tiles in that tilling. Now, we consider the $n$-ordered matrix
  \[C=\left(
  \begin{array}{ccccccccc}
  c_1 & -c_2 & c_3&\cdots & (-1)^{r+1}c_r & 0 & \cdots & 0&0 \\
  1& c_1 &-c_2& \cdots & (-1)^rc_{r-1} & (-1)^{r+1}c_r &\cdots& 0&0 \\
  0 & 1 & c_1& \cdots & (-1)^{r-1}c_{r-2}& (-1)^rc_{r-1} &\cdots& 0&0 \\
  \vdots & \vdots &\vdots& \ddots &\vdots & \vdots & \ddots&\vdots&\vdots \\
  0&0&0&\cdots&c_1&-c_2&\cdots&(-1)^{n-r+1}c_{n-r}&(-1)^{n-r+2}c_{n-r+1}\\
  0&0&0&\cdots&1&c_1&\cdots&(-1)^{n-r}c_{n-r-1}&(-1)^{n-r+1}c_{n-r}\\
  \vdots & \vdots &\vdots& \ddots &\vdots & \vdots & \ddots&\vdots&\vdots \\
  0 & 0 & 0&\cdots & 0 & 0 &\cdots &c_1 & -c_2\\
  0 & 0 & 0&\cdots & 0 & 0 &\cdots &1 & c_1\\
  \end{array}
  \right).\]
\begin{theorem}\label{gen lrr the}
Let $u_n (n\geq 1)$ be a general linear recurrence defined as \eqref{gen liner recc}. Then $u_n=\det(C).$	
\end{theorem}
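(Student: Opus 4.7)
The plan is to mirror the combinatorial approach used in the proof of Theorem \ref{ele and hom relation}, exploiting the cycle-decomposition formula for $\det(C)$ and matching it term-by-term with the tiling interpretation of $u_n$ given just before the theorem.

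First, I would expand $\det(C) = \sum_{L}(-1)^{n-c(L)}w(L)$ where the sum ranges over all linear subdigraphs $L$ of $D(C)$, and then classify the possible cycles in $D(C)$. The nonzero entries of $C$ occur on the diagonal (value $c_1$), on the $(k-1)$-th superdiagonal (value $(-1)^{k+1}c_k$ for $k=2,\dots,r$), and on the first subdiagonal (value $1$). Consequently, from any vertex $i$ one can step forward to $i+k-1$ for some $k\in\{2,\dots,r\}$ or backward to $i-1$ only. I would argue that the only way to close a cycle is to take one forward edge $i\to i+k-1$ of weight $(-1)^{k+1}c_k$ followed by $k-1$ backward steps each of weight $1$, yielding a cycle on the $k$ consecutive vertices $\{i,i+1,\dots,i+k-1\}$ of total weight $(-1)^{k+1}c_k$. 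A loop at a single vertex has weight $c_1$ and corresponds to $k=1$. This is the step I expect to require the most care: ruling out more intricate cycles that combine several forward jumps with interleaved backward steps.

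Once the cycle structure is pinned down, every LSD $L$ corresponds uniquely to an ordered decomposition of $[n]$ into consecutive blocks of sizes $k_1,k_2,\dots,k_m$ with $k_j\in\{1,2,\dots,r\}$ and $\sum k_j=n$, where $m=c(L)$. Its weight is $w(L)=\prod_{j=1}^{m}(-1)^{k_j+1}c_{k_j}$ (with the convention that a block of size $1$ contributes $c_1$ without sign). The signed contribution to the determinant is then
\begin{equation*}
(-1)^{n-c(L)}w(L)=\prod_{j=1}^{m}(-1)^{k_j-1}\cdot\prod_{j=1}^{m}(-1)^{k_j+1}c_{k_j}=\prod_{j=1}^{m}(-1)^{2k_j}c_{k_j}=\prod_{j=1}^{m}c_{k_j},
\end{equation*}
so the alternating signs in $C$ are precisely designed to cancel the signs coming from the cycle-decomposition formula.

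The bijection with tilings is then transparent: a tiling of the $n$-board by tiles of lengths $k_1,\dots,k_m\le r$ (each weighted by $c_{k_j}$) matches the LSD whose cycles live on the corresponding consecutive vertex-blocks, and the weights agree by the computation above. Summing over all such decompositions gives
\begin{equation*}
\det(C)=\sum_{L}(-1)^{n-c(L)}w(L)=\sum_{k_1+\cdots+k_m=n,\ 1\le k_j\le r}\prod_{j=1}^{m}c_{k_j}=u_n,
\end{equation*}
which is the desired identity. The main obstacle, as noted, is the rigorous classification of cycles in $D(C)$; after that, the sign cancellation and bijection are straightforward bookkeeping.
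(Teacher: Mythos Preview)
Your proposal is correct and follows essentially the same approach as the paper: both set up the bijection between tilings of the $n$-board and linear subdigraphs of $D(C)$ whose cycles sit on consecutive vertex-blocks, and both verify that the alternating signs in $C$ cancel the sign $(-1)^{n-c(L)}$ so that each LSD contributes $\prod c_{k_j}$. The paper handles the sign via a parity split into even- and odd-length cycles while you do it directly via $\prod(-1)^{k_j-1}\cdot(-1)^{k_j+1}=1$, and the cycle classification you flag as the main obstacle is precisely what the paper sweeps under ``Clearly, this is a bijection''; your argument (max vertex must exit backward, forcing a single forward jump closing the cycle) fills that in.
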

\begin{proof}
To prove the theorem we use the combinatorial interpretation (stated above) of $u_n$ and the fact 	
 $\det(C)=\sum\limits_L(-1)^{n-c(L)}w(L),$ 
where the summation runs over all LSD, $L$ in  $D(C)$ (as in Section $2$). In fact, we show a sign and weight preserving bijection between tillings of $n$-board and linear subdigraphs of $D(C).$ Suppose $\tau$ be a tilling of the $n$-board and $\tau$ contains tiles of length at most $r.$ Let for each fix $i\in [r], \tau$ contains $t_i$ many tiles $\tau_{i1}, \cdots, \tau_{it_i}$ of length $i.$ For fix $j\in [t_i],$ let the tile $\tau_{ij}$ occupies the position $k_{ij}, (k_{ij}+1),\cdots (k_{ij}+i-1),$ where  $k_{ij} \in [n].$ (For example, the left hand side of Figure \ref{fig:fibbo} contains two tiles of lengths $2$ and $3,$ occupying the positions $1, 2$, and $3, 4, 5$ respectively). For this tilling we choose the LSD, $L_{\tau},$ such that $L_{\tau}$ contains $t_i$ many cycles $C_{i1}, \cdots, C_{it_i}$ of length $i.$ Moreover the cycle $C_{ij}$ (corresponding to the tile $\tau_{ij},$) contains the vertices $k_{ij}, (k_{ij}+1),\cdots (k_{ij}+i-1),$ and $k_{ij}\rightarrow(k_{ij}+i-1)\rightarrow(k_{ij}+i-2)\rightarrow \cdots\rightarrow(k_{ij}+1)\rightarrow k_{ij}$ (here $u\rightarrow v$ means a edge directed from $u$ to $v$). Clearly, this is a bijection. See Figure \ref{fig:fibbo} for an illustration.      
  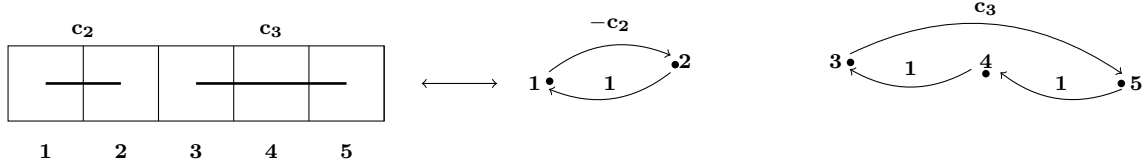
\begin{figure}[ht!]
  	\tiny
  	\tikzstyle{ver}=[]
  	\tikzstyle{vert}=[circle, draw, fill=black!100, inner sep=0pt, minimum width=4pt]
  	\tikzstyle{vertex}=[circle, draw, fill=black!.5, inner sep=0pt, minimum width=4pt]
  	\tikzstyle{edge} = [draw,thick,-]
  	\tikzstyle{node_style} = [circle,draw=blue,fill=blue!20!,font=\sffamily\Large\bfseries]
  	\centering
  \begin{tikzpicture}[scale=1]
  	\tikzstyle{edge_style} = [draw=black, line width=2mm, ]
  	\tikzstyle{node_style} = [draw=blue,fill=blue!00!,font=\sffamily\Large\bfseries]
  	\tikzset{
  		LabelStyle/.style = { rectangle, rounded corners, draw,
  			minimum width = 2em, fill = yellow!50,
  			text = red, font = \bfseries },
  		VertexStyle/.append style = { inner sep=5pt,
  			font = \Large\bfseries},
  		EdgeStyle/.append style = {->, bend left} }
  	\node (a) at (7,.5)   {$\bf{1}$};
  	\node (b) at (15,.5)   {$\bf{5}$};
  	\node (c) at (13,.8)   {$\bf{4}$};
  	\node (d) at (11,.8)   {$\bf{3}$};
  	\node (e) at (9,.8)   {$\bf{2}$};
   \draw  (0,0) rectangle (5,1);
   \node (1) at (1,1.2)   {$\bf{c_2}$};
   \node (1) at (3.5,1.2)   {$\bf{c_3}$};
   \draw  (1,0) -- (1,1);
   \draw  (2,0) -- (2,1);
   \draw  (3,0) -- (3,1);
   \draw  (4,0) -- (4,1);
   \draw  [line width=.35 mm](.5,.5) -- (1.5,.5);
    \draw  [line width=.35 mm](2.5,.5) -- (4.5,.5);
   \node (1) at (.5,-.4)   {$\bf{1}$};
   \node (1) at (1.5,-.4)   {$\bf{2}$};
   \node (1) at (2.5,-.4)   {$\bf{3}$};
   \node (1) at (3.5,-.4)   {$\bf{4}$};
   \node (1) at (4.5,-.4)   {$\bf{5}$};
   \draw  [<-> ] (5.5,.5)-- (6.5,.5);
   \draw  (5,0) -- (5,1);
   \node (1) at (13,1.5)   {$\bf{c_3}$};
   \node (1) at (8,1.3)   {$\bf{-c_2}$};
   \node (1) at (8,.5)   {$\bf{1}$};
   \node (1) at (12,.7)   {$\bf{1}$};
   \node (1) at (14,.5)   {$\bf{1}$};
  \draw[->] (d)  to[bend left] (b);
  \draw[->] (b)  to[bend left] (c);
   \draw[->] (c)  to[bend left] (d);
   \draw[->] (a)  to[bend left] (e);
     \draw[->] (e)  to[bend left] (a);
  	\fill[black!100!](7.2,.53) circle (.05);
  	\fill[black!100!](8.87,.75) circle (.05);
  	\fill[black!100!](11.2,.78) circle (.05);
  	\fill[black!100!](13,.63) circle (.05);
  	\fill[black!100!](14.8,.5) circle (.05);
  	\end{tikzpicture}
  	\caption{ The bold line in each tile represents the length of the corresponding tile. } 
  	\label{fig:fibbo}
  \end{figure}
To complete the proof, we have to show that this bijection is sign and weight preserving.  So, suppose $L_{\tau}$ contains $k$ many even length cycles and $m$ many odd length cycles. Let the total number of vertices in $k$ cycles are $2s.$  The remaining $n-2s (n$ is total number of vertices in $L_{\tau}$ ) vertices are in $m$ odd cycles. Now, \[(-1)^{n-(k+m)}w(L_{\tau})=(-1)^k(-1)^{n-m}w(L_{\tau}).\] If $m$ is odd, then $n-2s$ is odd, so $n$ is odd. Again if $m$ is even, then $n$ is also even. So, $(-1)^{n-m}$ is always $1.$ Now observe that, only each even length cycle contributes negative sign to the weight of the LSD, $L_{\tau}$. Hence $(-1)^{n-(k+m)}w(L_{\tau})$ is positive. Again, from the construction of bijection, weight of $\tau$ is same as weight of the LSD, $L_{\tau}.$ For example, Figure \ref{fig:fibbo1}, describes the require bijection for finding $u_4,$ in the recurrence $u_n=c_1u_{n-1}+c_2u_{n-2}.$ 
 \end{proof}
   \begin{figure}[ht!]
   	\tiny
   	\tikzstyle{ver}=[]
   	\tikzstyle{vert}=[circle, draw, fill=black!100, inner sep=0pt, minimum width=4pt]
   	\tikzstyle{vertex}=[circle, draw, fill=black!.5, inner sep=0pt, minimum width=4pt]
   	\tikzstyle{edge} = [draw,thick,-]
   	\tikzstyle{node_style} = [circle,draw=blue,fill=blue!20!,font=\sffamily\Large\bfseries]
   \begin{tikzpicture}[scale=1]
   	\tikzstyle{edge_style} = [draw=black, line width=2mm, ]
   	\tikzstyle{node_style} = [draw=blue,fill=blue!00!,font=\sffamily\Large\bfseries]
   	\tikzset{
   		LabelStyle/.style = { rectangle, rounded corners, draw,
   			minimum width = 2em, fill = yellow!50,
   			text = red, font = \bfseries },
   		VertexStyle/.append style = { inner sep=5pt,
   			font = \Large\bfseries},
   		EdgeStyle/.append style = {->, bend left} }
   	\node (B1) at (-1.5,0)   {	$\begin{array}{cclll}
   		\begin{Young}
   		&&&\cr
   		\end{Young}&\hspace{.1cm}
   		\end{array}$};
   	\node (B1) at (-1.5,.5)   {$\bf{c_1}$};
   	\node (B1) at (-2,.5)   {$\bf{c_1}$};%
   	\node (B1) at (-2.4,-.5)   {$\bf{1}$};
   	\node (B1) at (-1.9,-.5)   {$\bf{2}$};
   	\node (B1) at (-1.4,-.5)   {$\bf{3}$};
   	\node (B1) at (-.9,-.5)   {$\bf{4}$};
   	\node (B1) at (-1,.5)   {$\bf{c_1}$};
   	\node (B1) at (-2.5,.5)   {$\bf{c_1}$};
   	\draw[line width=.35 mm ] (-1.6,0) -- (-1.3,0);
   \draw[line width=.35 mm  ] (-1.15,0) -- (-.85,0);
   	\draw[ line width=.35 mm ] (-2.1,0) -- (-1.8,0);
   	\draw[line width=.35 mm  ] (-2.6,0) -- (-2.3,0);
   	%
   	\draw[<->, line width=.1 mm] (0.3,0) -- (2,0);
   	\draw[](3.5,0) circle (.7);
   	\draw[ ] (4.2,0) -- (4.3,.2);
   	\draw[ ] (4.2,0) -- (4.1,.2);
   	\node (B1) at (3.5,-1)   {$\bf{1}$};
   	\fill[black!100!](3.5,-.7) circle (.05);
   	\draw[](6,0) circle (.7);
   	\draw[ ] (6.7,0) -- (6.8,.2);
   	\draw[ ] (6.7,0) -- (6.6,.2);
   	\node (B1) at (6,-1)   {$\bf{2}$};
   	\fill[black!100!](6,-.7) circle (.05);
   	\draw[](8.5,0) circle (.7);
   	\draw[ ] (9.2,0) -- (9.3,.2);
   	\draw[ ] (9.2,0) -- (9.1,.2);
   	\node (B1) at (8.5,-1)   {$\bf{3}$};
   	\fill[black!100!](8.5,-.7) circle (.05);
   	\draw[](11,0) circle (.7);
   	\draw[  ] (11.7,0) -- (11.8,.2);
   	\draw[  ] (11.7,0) -- (11.6,.2);
   	\node (B1) at (11,-1)   {$\bf{4}$};
   	\fill[black!100!](11,-.7) circle (.05);
   	\node (B1) at (3.5,1)   {$\bf{c_1}$};
   	\node (B1) at (6,1)   {$\bf{c_1}$};
   	\node (B1) at (8.5,1)   {$\bf{c_1}$};
   	\node (B1) at (11,1)   {$\bf{c_1}$};
   	\node (B1) at (-1.4,-3.5)   {$\begin{array}{cclll}
   		\begin{Young}
   		&&&\cr
   		\end{Young}&\hspace{.1cm}
   		\end{array}$};
   	\node (B1) at (-1.4,-3)   {$\bf{c_1}$};
   	\node (B1) at (-.9,-3)   {$\bf{c_1}$};
   	\node (B1) at (-2.2,-3)   {$\bf{c_2}$}; 
   	\draw[line width=.35 mm  ] (-1.55,-3.5) -- (-1.25,-3.5);
   	\draw[line width=.35 mm  ] (-1,-3.5) -- (-.75,-3.5);
   \node (B1) at (-2.4,-4)   {$\bf{1}$};
   \node (B1) at (-1.9,-4)   {$\bf{2}$};
   \node (B1) at (-1.4,-4)   {$\bf{3}$};
   \node (B1) at (-.9,-4)   {$\bf{4}$};
   	\draw[<->, line width=.2 mm] (0.2,-3.5) -- (2,-3.5);
   	\draw[](8.5,-3.5) circle (.7);
   	\draw[ ] (9.2,-3.5) -- (9.1,-3.3);
   	\draw[ ] (9.2,-3.5) -- (9.3,.-3.3);
   	\node (B1) at (8.5,-4.5)   {$\bf{3}$};
   	\fill[black!100!](8.5,-4.2) circle (.05);
   	\draw[line width=.35 mm](-2.4,-3.5)--(-1.8, -3.5);
   	\node (B1) at (8.5,-2.5)   {$\bf{c_1}$};
   	\draw[](11,-3.5) circle (.7);
   	\draw[ ] (11.7,-3.5) -- (11.6,-3.3);
   	\draw[ ] (11.7,-3.5) -- (11.8,-3.3);
   	\node (B1) at (11,-4.5)   {$\bf{4}$};
   	\fill[black!100!](11,-4.2) circle (.05);
   	\node (B1) at (11, -2.5)   {$\bf{c_1}$};
   	\node (B1) at (-1.4,-6.5)  {$\begin{array}{cclll}
   		\begin{Young}
   	&&&\cr
   	\end{Young}&\hspace{.1cm}
   		\end{array}$};
   	\node (B1) at (-2.2,-6)   {$\bf{c_2}$};
   	\node (B1) at (-1.1,-6)   {$\bf{c_2}$};
   	\node (B1) at (-2.4,-7)   {$\bf{1}$};
   	\node (B1) at (-1.9,-7)   {$\bf{2}$};
   	\node (B1) at (-1.4,-7)   {$\bf{3}$};
   	\node (B1) at (-.9,-7)   {$\bf{4}$};
   	%
   	\draw[<->, line width=.2 mm] (0.3,-6.5) -- (2,-6.5);
   \draw[line width=.25 mm](-2.4,-6.5)--(-1.8, -6.5);
   	\draw[line width=.25 mm](-1.35,-6.5)--(-0.8, -6.5);
   	\node[] (a) at  (3,-3.6) {$\bf{1}$};
   	\node[] (b) at  (6.5,-3.5) {$\bf{2}$};
   	\fill[black!100!](3.2,-3.6) circle (.05);
   	\fill[black!100!](6.3,-3.5) circle (.05);
   	\draw[->] (a)  to[bend left] (b);
   	\draw[->] (b)  to[bend left] (a);
   	\node (B1) at (4.8, -2.6)   {$\bf{-c_2}$};
   	\node (B1) at (4.8, -3.8)   {$\bf{1}$};
   	\node[] (c) at  (3,-6.5) {$\bf{1}$};
   	\node[] (d) at  (6.5,-6.5) {$\bf{2}$};
   	\node[] (e) at  (8.5,-6.5) {$\bf{3}$};
   	\node[] (f) at  (12,-6.5) {$\bf{4}$};
   	\fill[black!100!](3.2,-6.5) circle (.05);
   	\fill[black!100!](6.3,-6.5) circle (.05);
   	\fill[black!100!](8.7,-6.5) circle (.05);
   	\fill[black!100!](11.8,-6.5) circle (.05);
   	\draw[->] (c)  to[bend left] (d);
   	\draw[->] (d)  to[bend left] (c);
   	\draw[->] (e)  to[bend left] (f);
   	\draw[->] (f)  to[bend left] (e);
   	\node (B1) at (5, -5.6)   {$\bf{-c_2}$}; 
   	\node (B1) at (5, -6.8)   {$\bf{1}$}; 
   	\node (B1) at (10, -5.6)   {$\bf{-c_2}$};
   	\node (B1) at (10, -6.8)   {$\bf{1}$};
   	%
   	\node (B1) at (-1.4,-8.5)  {$\begin{array}{cclll}
   		\begin{Young}
   		&&&\cr
   		\end{Young}&\hspace{.1cm}
   		\end{array}$};
   	\node (B1) at (-1.6,-8)   {$\bf{c_4}$};
   	\node (B1) at (-2.4,-9)   {$\bf{1}$};
   	\node (B1) at (-1.9,-9)   {$\bf{2}$};
   	\node (B1) at (-1.4,-9)   {$\bf{3}$};
   	\node (B1) at (-.9,-9)   {$\bf{4}$};
   	\draw[<->, line width=.2 mm] (0.3,-8.5) -- (2,-8.5);
   	\draw[line width=.35 mm](-2.4,-8.5)--(-.9, -8.5);
   	\node[] (a) at  (3,-8.6) {$\bf{1}$};
   	\node[] (d) at  (12,-8.6) {$\bf{4}$};
   	\node[] (b) at  (6.5,-8.6) {$\bf{2}$};
   	\node[] (c) at  (8.5,-8.6) {$\bf{3}$};
   	\fill[black!100!](3.2,-8.6) circle (.05);
   	\fill[black!100!](11.8,-8.6) circle (.05);
   	\draw[->] (a)  to[bend left] (d);
   	\draw[->] (d)  to[bend left] (c);
   	\draw[->] (c)  to[bend left] (b);
   	\draw[->] (b)  to[bend left] (a);
   	 \node (B1) at (5, -8.6)   {$\bf{1}$}; 
   	\node (B1) at (5, -6.8)   {$\bf{1}$}; 
   	 \node (B1) at (8, -7)   {$\bf{-c_4}$};
   	\node (B1) at (10, -8.6)   {$\bf{1}$};
   	\node (B1) at (7.6, -8.6)   {$\bf{1}$};
\end{tikzpicture}
\caption{ Each bold line in every tile represents the length of the corresponding tile.  For example, the last tilling of the board contains exactly one tile of length $4$ and occupying the position $1, 2, 3, 4,$ whereas the second tilling contains one tile of length $2$ occupying the position $1, 2$ and two $1$ length tiles occupying the positions $3$ and $4.$ Numbers appearing above the tiles is the weights of the corresponding tiles. The numbers appearing above the edges are weights of the corresponding edges. }
   	\label{fig:fibbo1}
   \end{figure}
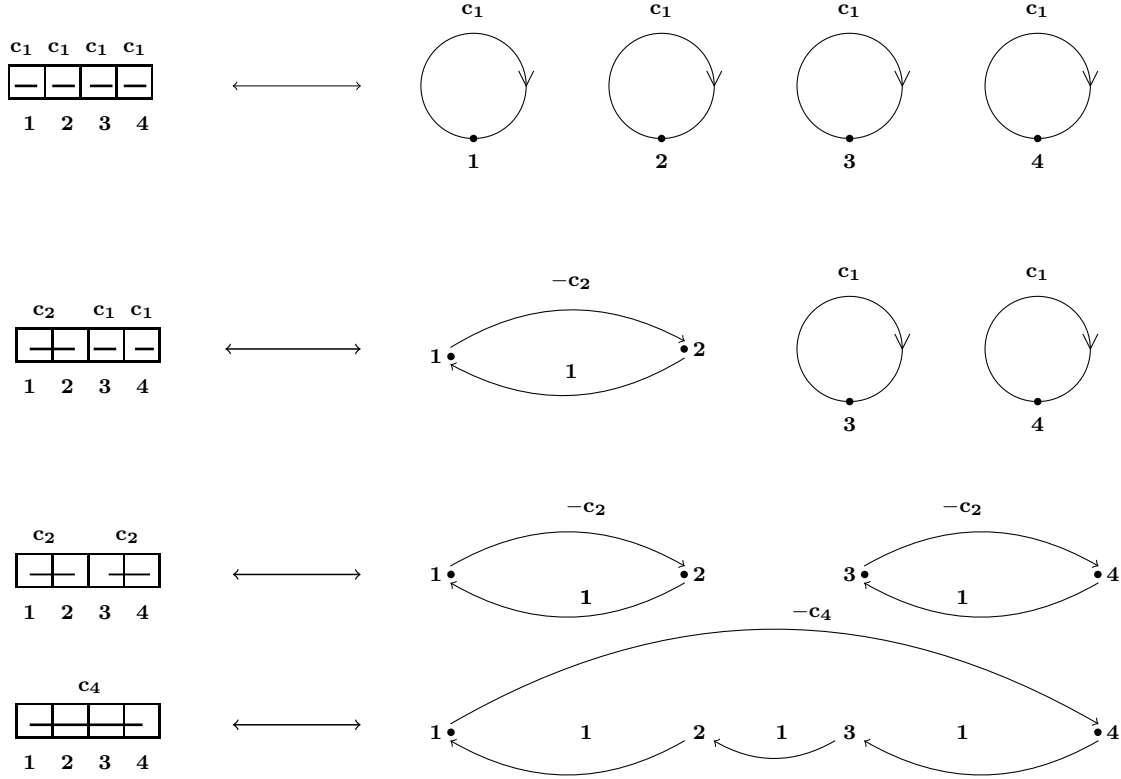
\begin{defn}\label{r-nacci num}
The $n$-th term $F_n,$ of the $r$-acci numbers  defined by the recurrence	\[F_n=F_{n-1}+F_{n-2}+\cdots+F_{n-r},\hspace{.5cm} n\geq 2, F_0=1, F_{i}=0, i<0.\] 
\end{defn} 
One combinatorial interpretation  of $r$-acci numbers $F_n$ is, the number of ways to tile an $n$-board using tiles of length at most $r.$ In \cite{26}, the authors proved a formula for the $r$-acci numbers. Here we give a determinantal expression of $r$-acci numbers. Consider an $n\times n$ matrix 
\begin{equation}\label{matrix G} 	
 	G= \left(\begin{array}{ccccccccc}
 	1 & -1 & 1 & \cdots & (-1)^{r+1} & 0 &   \cdots  & 0  & 0 \\
 	1& 1 & -1 & \cdots & (-1)^r & (-1)^{r+1}&  \cdots & 0  &0\\
 	0 & 1 & 1 & \cdots & (-1)^{r-1} & (-1)^r&\cdots&0&0 \\
 	\vdots & \vdots & \vdots &\ddots & \vdots & \vdots  &\ddots&\vdots&\vdots\\
 	0&0&0&\cdots&1&-1&\cdots& (-1)^{n-r+1}&(-1)^{n-r+2}\\
 	0&0&0&\cdots&1&1&\cdots& (-1)^{n-r}&(-1)^{n-r+1}\\
 	\vdots & \vdots & \vdots &\ddots & \vdots & \vdots  &\ddots&\vdots&\vdots\\
 	0 & 0 &0 & \cdots &0& 0&  \cdots & 1&-1 \\
 	0 & 0 & 0 & \cdots & 0&0  &\cdots& 1 & 1 \\
 	\end{array}
 	\right).
 	\end{equation}
 	\begin{corollary}\label{gen fibo}
 Let $F_n$ be the $n$-th term of  the $r$-acci numbers defined as \eqref{r-nacci num}. Then $F_n=\det(G).$
\end{corollary}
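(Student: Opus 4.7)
The plan is to recognize $F_n$ as a special instance of the general linear recurrence studied in Theorem \ref{gen lrr the} and then invoke that theorem directly. Specifically, the $r$-acci recurrence
\[F_n = F_{n-1}+F_{n-2}+\cdots+F_{n-r}\]
is the $r$-th order linear recurrence from Definition \ref{gen liner recc} with the particular choice of coefficients $c_1 = c_2 = \cdots = c_r = 1$, and the initial conditions $F_0 = 1$, $F_j = 0$ for $j < 0$ match exactly.

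The first step is to substitute $c_i = 1$ for $i = 1, 2, \ldots, r$ into the matrix $C$ from the statement of Theorem \ref{gen lrr the}. Under this substitution, each entry $(-1)^{t+1} c_t$ becomes $(-1)^{t+1}$ and each entry $c_1$ becomes $1$, so the resulting matrix agrees entry by entry with the matrix $G$ defined in \eqref{matrix G}. That is, $G = C\bigl\rvert_{c_1=\cdots=c_r=1}$.

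The second step is to apply Theorem \ref{gen lrr the} to this specialization: the theorem guarantees that for any choice of $c_1,\ldots,c_r$, the solution $u_n$ of the corresponding recurrence equals $\det(C)$, with the equality proved via a sign- and weight-preserving bijection between tilings of the $n$-board and linear subdigraphs of $D(C)$. Since the $r$-acci number $F_n$ is the value of $u_n$ when $c_1 = \cdots = c_r = 1$, and $G$ is the value of $C$ under the same substitution, we conclude $F_n = \det(G)$.

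There is no real obstacle here; the corollary is an immediate specialization, and the only thing to remark on is that with all $c_i = 1$ the weight of every tile is $1$, so the bijection of Theorem \ref{gen lrr the} simply counts tilings of the $n$-board by tiles of length at most $r$, which is exactly the combinatorial description of $F_n$ recalled just before \eqref{matrix G}.
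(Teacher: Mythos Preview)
Your proposal is correct and matches the paper's own argument: the paper's proof simply states that the corollary follows from the combinatorial interpretation of the $r$-acci numbers together with the proof of Theorem \ref{gen lrr the}, which is exactly the specialization $c_1=\cdots=c_r=1$ you describe.
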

 	\begin{proof}
The proof of corollary follows from combinatorial interpretation the $r$-acci numbers (stated above) and the proof of Theorem \ref{gen lrr the}.  
 \end{proof}
 \begin{corollary}[Corollary $5$ \cite{26}]
 Let $F_n$ be the $n$-th term of the $r$-acci numbers. Then	\[F_n=\sum\limits_{2i_2+\cdots+ri_r \leq n}\frac{(n-i_2-2i_3-\cdots-(r-1)i_r)!}{i_2!\cdots i_r!((n-2i_2-3i_3-\cdots-ri_r)!)}.\]
\end{corollary}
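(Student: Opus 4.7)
The plan is to combine Corollary \ref{gen fibo}, which gives $F_n=\det(G)$, with the cycle-cover expansion $\det(G)=\sum_{L}(-1)^{n-c(L)}w(L)$ over linear subdigraphs of $D(G)$, and then to organize those LSDs by cycle-type exactly as in the proof of Corollary \ref{coro 1}.

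The first step is to classify the cycles of nonzero weight in $D(G)$. Reading off $G$, the outgoing edges from vertex $i$ go to $i-1$ with weight $1$ and to $i+s$ with weight $(-1)^s$ for $s=0,1,\ldots,r-1$. A minimality argument on the smallest vertex $k$ of a cycle $C$ of length $t\geq 2$ shows the only edge of $C$ entering $k$ must come from $k+1$; tracing the next edge out of $k$ and using the fact that $C$ must return to $k+1$ while visiting each intermediate vertex exactly once forces $C$ to be $k\to k+t-1\to k+t-2\to\cdots\to k+1\to k$, so $C$ occupies the consecutive block $\{k,\ldots,k+t-1\}$. The weight of this cycle is $(-1)^{t-1}$, coming from the single long edge $k\to k+t-1$ combined with $t-1$ subdiagonal edges; loops contribute weight $1$.

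The second step is the sign/weight cancellation. If an LSD $L$ has $i_t$ cycles of length $t$ for $t=2,\ldots,r$, then it has $i_1:=n-2i_2-3i_3-\cdots-ri_r$ loops and $c(L)=i_1+i_2+\cdots+i_r=n-i_2-2i_3-\cdots-(r-1)i_r$, so
\[
(-1)^{n-c(L)}w(L)=(-1)^{\sum_{t\geq 2}(t-1)i_t}\prod_{t\geq 2}\bigl((-1)^{t-1}\bigr)^{i_t}=(-1)^{2\sum_{t\geq 2}(t-1)i_t}=1.
\]
Hence every LSD contributes $+1$ to $F_n$, and the result reduces to counting LSDs by cycle-type.

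The third step is a multinomial count. Since each cycle occupies a consecutive block of vertices, an LSD with cycle-type $(i_1,i_2,\ldots,i_r)$ corresponds to an ordered composition of $[n]$ into blocks whose multiset of sizes is $\{1^{i_1},2^{i_2},\ldots,r^{i_r}\}$. The number of such compositions is the multinomial
\[
\frac{(i_1+i_2+\cdots+i_r)!}{i_1!\,i_2!\cdots i_r!}=\frac{(n-i_2-2i_3-\cdots-(r-1)i_r)!}{i_2!\cdots i_r!\,(n-2i_2-3i_3-\cdots-ri_r)!},
\]
and summing over $(i_2,\ldots,i_r)$ with $2i_2+3i_3+\cdots+ri_r\leq n$ yields the identity. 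The main obstacle is the cycle-classification step: one must carefully exclude ``exotic'' cycles with more than one upward edge. Once that is done, the sign cancellation and the multinomial count are routine.
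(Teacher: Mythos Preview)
Your proposal is correct and follows essentially the same approach as the paper: invoke Corollary~\ref{gen fibo} to get $F_n=\det(G)$, observe that every LSD of $D(G)$ contributes $+1$ to the determinant, and then count LSDs by cycle-type via the multinomial coefficient. You simply spell out in more detail what the paper leaves implicit (the classification of nonzero-weight cycles as consecutive blocks and the explicit sign/weight cancellation), but the argument is the same.
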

 \begin{proof}
 By Corollary \ref{gen fibo}, $F_n=\det(G)$ and each LSD of $D(G)$ contributes $1$ to $\det(G).$ Again, \[\frac{(n-i_2-2i_3-\cdots-(r-1)i_r)!}{i_2!\cdots i_r!((n-2i_2-3i_3-\cdots-ri_r)!)}\] is the number of LSD containing $(n-i_2-2i_3-\cdots-(r-1)i_r)!$ many loops and $i_t (t=2, 3, \cdots, r)$ many cycles of length $t.$  Hence   \[\det(G)=\sum\limits_{2i_2+\cdots+ri_r \leq n}\frac{(n-i_2-2i_3-\cdots-(r-1)i_r)!}{i_2!\cdots i_r!((n-2i_2-3i_3-\cdots-ri_r)!)}.\]  
 \end{proof}
 The $n$-th term Fibonacci number $f_n$ satisfies the   recurrence
\begin{equation}\label{fibo recc}
 f_n = f_{n-1} + f_{n-2}, \hspace{.4cm} n\geq 2, f_0=1, f_1=1.
\end{equation}
A combinatorial interpreation of Fibonacci numbers $f_n$ is, the number of tillings of an $n$-board using tiles of length at most $2.$ Now consider an $n\times n$ matrix 
\begin{equation}\label{Fibbo matrix}
F=\left(
\begin{array}{cccccc}
1 & -1 & 0  &\cdots  & 0 & 0\\
1 & 1 & -1   &\cdots & 0 & 0 \\
0 & 1  & 1 & \cdots & 0  & 0  \\
\vdots & \vdots & \vdots & \ddots & \vdots & \vdots \\
0 & 0 & 0 & \cdots  & 1 & -1  \\
0 & 0 & 0 & \cdots & 1 & 1 \\
\end{array}
\right).
\end{equation}
 Then we have the following corollary about Fibonacci numbers.
 \begin{corollary}\label{fibocor}
 Let $F$ be the matrix defined as \eqref{Fibbo matrix}. Then $\det(F)$ gives the $n$-th Fibonacci number, $f_n.$
 \end{corollary}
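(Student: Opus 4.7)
The plan is to deduce Corollary \ref{fibocor} as the $r = 2$ specialization of Corollary \ref{gen fibo}, since the Fibonacci numbers are precisely the $2$-acci numbers. The argument thus has two ingredients: matching the sequences and matching the matrices.

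First, I would check that the Fibonacci sequence of \eqref{fibo recc} coincides with the $r$-acci sequence of Definition \ref{r-nacci num} when $r = 2$. Definition \ref{r-nacci num} gives $F_0 = 1$, $F_i = 0$ for $i < 0$, and $F_n = F_{n-1} + F_{n-2}$ for $n \geq 2$; from this, $F_1 = F_0 + F_{-1} = 1 = f_1$, and so $F_n = f_n$ for all $n \geq 0$ by induction on $n$.

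Second, I would verify that the matrix $F$ in \eqref{Fibbo matrix} is exactly the matrix $G$ of \eqref{matrix G} with $r = 2$. For $r = 2$ the only nonzero band entries of $G$ are the diagonal entries $1$, the superdiagonal entries $(-1)^{r+1} = -1$, and the subdiagonal entries $1$; all entries strictly above the superdiagonal or strictly below the subdiagonal vanish, as they correspond to cycle lengths exceeding $r = 2$. This is precisely the matrix $F$. Applying Corollary \ref{gen fibo} then yields $f_n = F_n = \det(G) = \det(F)$.

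There is no serious obstacle here because the result is a direct specialization; the only subtlety worth flagging is the alignment of initial conditions between \eqref{fibo recc} and Definition \ref{r-nacci num}, which is settled by the computation $F_1 = 1$. If one preferred a self-contained combinatorial argument, one could instead specialize the sign-and-weight preserving bijection of Theorem \ref{gen lrr the} to $r = 2$: a tiling of an $n$-board by $k$ squares and $m$ dominoes corresponds to a linear subdigraph $L$ of $D(F)$ with $k$ loops (weight $1$ each) and $m$ disjoint $2$-cycles on consecutive vertices (weight $-1$ each), so $n - c(L) = m$ and $(-1)^{n-c(L)} w(L) = (-1)^m \cdot (-1)^m = 1$. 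Every linear subdigraph contributes $+1$ to $\det(F)$, and the total count is $f_n$.
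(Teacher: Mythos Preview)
Your proposal is correct and aligns with the paper's own proof, which simply invokes ``the same argument as in the proof of Theorem \ref{gen lrr the} and the combinatorial interpretation of Fibonacci numbers.'' Your primary route factors through Corollary \ref{gen fibo} (the $r$-acci case) rather than citing Theorem \ref{gen lrr the} directly, but since Corollary \ref{gen fibo} is itself just the $c_i\equiv 1$ specialization of that theorem, the two arguments are substantively identical; your closing paragraph in fact spells out exactly the bijection the paper has in mind.
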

 \begin{proof}
 We use the same argument as in the proof of Theorem \ref{gen lrr the} and the combinatorial interpretation of Fibonacci numbers as described above.    
\end{proof}
\begin{corollary}[Binet's Formula]
 	Let $f_n$ be the $n$-th Fibonacci numbers defined as \ref{fibo recc}. Then Binet's formula says that \[f_n=\frac{\left( \frac{1+\sqrt{5}}{2}\right)^{n+1}-\left( \frac{1-\sqrt{5}}{2}\right)^{n+1}}{\sqrt{5}}.\]
 \end{corollary}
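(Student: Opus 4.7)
The plan is to specialize Corollary \ref{cor3} by choosing $x,y$ so that $x+y=1$ and $xy=-1$; these conditions force $x$ and $y$ to be the roots of $t^{2}-t-1=0$, namely $x=\frac{1+\sqrt{5}}{2}$ and $y=\frac{1-\sqrt{5}}{2}$. With this substitution, both sides of the identity in Corollary \ref{cor3} collapse onto quantities intrinsic to the Fibonacci sequence.

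Under $x+y=1$ and $xy=-1$, each summand of the left-hand side of Corollary \ref{cor3} satisfies $(x+y)^{n-2i}=1$ and $(-1)^{i}(xy)^{i}=(-1)^{i}(-1)^{i}=1$, so the left-hand side reduces to $\sum_{2i\le n}\binom{n-i}{i}$. I would then identify this with $f_{n}$ using the combinatorial content already developed: by Corollary \ref{fibocor}, $f_{n}=\det(F)=\sum_{L}(-1)^{n-c(L)}w(L)$, and an LSD of $D(F)$ consists of some number $i$ of disjoint $2$-cycles (each contributing weight $-1$) together with $n-2i$ loops (weight $1$). The sign $(-1)^{n-c(L)}=(-1)^{i}$ cancels the weight $(-1)^{i}$ coming from the $2$-cycles, so each LSD contributes $+1$; the number of LSDs with exactly $i$ $2$-cycles is $\binom{n-i}{i}$, which equivalently counts tilings of the $n$-board by $i$ dominoes and $n-2i$ squares. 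Thus $f_{n}=\sum_{2i\le n}\binom{n-i}{i}$ equals the left-hand side.

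For the right-hand side, I would invoke the geometric factorization
\[
x^{n}+x^{n-1}y+\cdots+xy^{n-1}+y^{n}=\frac{x^{n+1}-y^{n+1}}{x-y},
\]
and observe that $x-y=\sqrt{5}$ for our specific choice of $x$ and $y$. Equating the two sides of Corollary \ref{cor3} at $x=\frac{1+\sqrt{5}}{2}$, $y=\frac{1-\sqrt{5}}{2}$ yields Binet's formula directly. The main obstacle is not computational but conceptual: one must argue that this substitution is legitimate as a combinatorial derivation, i.e.\ that the combinatorial content of the identity (the LSD/weight bijection used to prove Corollary \ref{cor3}) is preserved under evaluation at the golden-ratio pair, so that the resulting Binet formula inherits a bijective proof rather than appearing as an algebraic afterthought.
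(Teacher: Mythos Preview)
Your proposal is correct and follows essentially the same approach as the paper: specialize Corollary~\ref{cor3} at $x+y=1$, $xy=-1$ so that $x,y$ are the roots of $t^2-t-1$, identify one side with $f_n$ via Corollary~\ref{fibocor}, and identify the other side with $\frac{x^{n+1}-y^{n+1}}{x-y}$. The only cosmetic difference is that the paper goes through the determinant directly (observing that the matrix in \eqref{bienet} becomes $F$ under the substitution, so $\det(F)=h_n(x,y)$), whereas you pass through the intermediate binomial sum $\sum_{2i\le n}\binom{n-i}{i}$ and re-derive $f_n=\sum_{2i\le n}\binom{n-i}{i}$ from the LSD expansion; this extra step is not needed, since it is precisely the content of Corollary~\ref{fibocor} applied to $F$.
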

 \begin{proof}
 	To prove the Binet's formula we consider the matrix described as \eqref{bienet} in Corollary \ref{cor3}. If $x\neq y,$ then the determinant of the matrix \eqref{bienet} can be written as $\frac{x^{n+1}-y^{n+1}}{x-y}.$ Now, we put $x+y=1$ and $xy=-1$ in the matrix \eqref{bienet}. Then we get the matrix  $F.$ Clearly $x, y$ are the roots of the polynomial $t^2-t-1=0.$ So, $x=\frac{1+\sqrt 5}{2}$ and $y=\frac{1-\sqrt 5}{2}.$  By Corollary \ref{fibocor},  $f_n=\det(F).$  Again by Corollary \ref{cor3}, $\det(F)=\frac{\left( \frac{1+\sqrt{5}}{2}\right)^{n+1}-\left( \frac{1-\sqrt{5}}{2}\right)^{n+1}}{\sqrt{5}}.$  \[\text{ Hence }f_n=\frac{\left( \frac{1+\sqrt{5}}{2}\right)^{n+1}-\left( \frac{1-\sqrt{5}}{2}\right)^{n+1}}{\sqrt{5}}.\]  
 \end{proof}
 \section{determinantal expression of lucas numbers and a new identity }
 In this section we derive a determinantal formula for  Lucas numbers. As a recipe to do so, we first prove a new determinantal identity and as a consequences we get the desired determinantal formula for Lucas numbers. Lucas numbers are defined by the following recurrence \[\ell_n=\ell_{n-1}+\ell_{n-2}, n\geq 3, \ell_0=2, \ell_1=1, \ell_2=3.\]
A well known combinatorial interpretation of Lucas numbers (see \cite{24}) $\ell_n,$ is the number of ways to tile a circular board composed of $n$ labeled cells with $1$-board and $2$-board . 

 For two variables  $a, b $ we consider an $n\times n (n\geq 3)$ matrix
 \begin{equation}\label{lucas gen matrix}
 S=	\left(\begin{array}{cccccc}
 a+b & (-1)^{n+1}a & 0  &\cdots  & 0 &b\\
 (-1)^{n+1}b & a+b & a   &\cdots & 0 & 0 \\
 0 & b  & a+b & \cdots & 0  & 0  \\
 \vdots & \vdots & \vdots & \ddots & \vdots & \vdots \\
 0 & 0 & 0 & \cdots  & a+b & a  \\
 a & 0 & 0 & \cdots & b & a+b \\
 \end{array}
 \right).
 \end{equation}
 Then the following holds.  
 \begin{theorem}\label{gen iden lucas}
Let $S$ be an $n\times n$ matrix defined as \eqref{lucas gen matrix}. Then $\det(S)=2(a^n+b^n).$	
 \end{theorem}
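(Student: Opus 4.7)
The plan is to apply the digraph interpretation $\det(S)=\sum_L(-1)^{n-c(L)}w(L)$ developed in Section~2. First I would describe the weighted digraph $D(S)$: at each vertex there is a loop of weight $a+b$; for $2\le i\le n-1$ there are edges $i\to i+1$ of weight $a$ and $i+1\to i$ of weight $b$; the ``wrap-around'' edges $1\to n$ and $n\to 1$ have weights $b$ and $a$; and the exceptional edges $1\to 2$ and $2\to 1$ carry the signs $(-1)^{n+1}a$ and $(-1)^{n+1}b$. The underlying undirected structure (ignoring loops) is the cycle graph $C_n$, so the only non-loop cycles available in $D(S)$ are the $n$ two-cycles on the edges of $C_n$ and the two Hamiltonian cycles (one in each direction).

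Using this observation I would partition the linear subdigraphs of $D(S)$ into two families. Type~(B) consists of the two Hamiltonian cycles. The forward cycle $1\to 2\to 3\to\cdots\to n\to 1$ collects one factor of $(-1)^{n+1}a$ and $n-1$ factors of $a$, giving cycle weight $(-1)^{n+1}a^n$ and LSD sign $(-1)^{n-1}$, so it contributes $(-1)^{2n}a^n=a^n$. Symmetrically the backward cycle contributes $b^n$, so type~(B) contributes $a^n+b^n$ in total. The sign placements $(-1)^{n+1}$ on the $1\leftrightarrow 2$ entries are designed precisely to eliminate the parity obstruction here.

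Type~(A) consists of LSDs built from non-overlapping $2$-cycles (a matching $M$ of $C_n$) together with loops at the remaining vertices. Every $2$-cycle on an edge of $C_n$ has weight $ab$ (for the $\{1,2\}$ pair the two $(-1)^{n+1}$ factors cancel; for $\{1,n\}$ one gets $b\cdot a=ab$; and the adjacent pairs give $a\cdot b=ab$ directly), while each loop has weight $a+b$. If $|M|=k$ then $c(L)=n-k$, so the sign is $(-1)^k$ and the contribution of the LSD is $(-ab)^k(a+b)^{n-2k}$. Summing over matchings of $C_n$ and using the classical count $\frac{n}{n-k}\binom{n-k}{k}$ of $k$-edge matchings in a cycle, the total type~(A) contribution is
\[
\sum_{k=0}^{\lfloor n/2\rfloor}\frac{n}{n-k}\binom{n-k}{k}(-ab)^k(a+b)^{n-2k},
\]
which is the well-known Waring/Lucas-polynomial identity for $a^n+b^n$ (equivalently the weighted count of circular square-and-domino tilings of an $n$-board, with squares of weight $a+b$ and dominoes of weight $-ab$, a circular analogue of the linear tiling argument behind Corollary~\ref{cor3}). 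Adding types~(A) and (B) gives $2(a^n+b^n)$, as required.

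The main obstacle I anticipate is the careful sign bookkeeping in type~(B): one must check that the $(-1)^{n+1}$ factors, the $(-1)^{n-1}$ coming from a single Hamiltonian cycle, and the lengths of the individual edges all combine to clean cancellation; this is the step the matrix has been custom-designed to accommodate. A secondary task is either invoking or reproving the Lucas-polynomial identity used for type~(A); the cleanest in-house route is to observe that type~(A) LSDs are in weight-preserving bijection with tilings of the circular $n$-board by squares of weight $a+b$ and dominoes of weight $-ab$, and then argue by a standard transfer-matrix or induction argument that this weighted count is $a^n+b^n$.
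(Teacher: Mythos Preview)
Your argument is correct: the classification of linear subdigraphs into the two Hamiltonian cycles and matchings-plus-loops is exactly right, the sign analysis for type~(B) is clean, and the Waring/Lucas identity you quote does evaluate the type~(A) sum to $a^n+b^n$.

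The paper, however, takes a different route to evaluating the type~(A) piece, one more in keeping with its overall ``combinatorialization'' theme. Rather than invoking the algebraic identity $\sum_k \frac{n}{n-k}\binom{n-k}{k}(-ab)^k(a+b)^{n-2k}=a^n+b^n$, the paper sets up the following: cyclic words of length $n$ in the alphabet $\{a,b\}$ in which the pattern $ab$ never appears at two consecutive positions are trivially just the constant words $aa\cdots a$ and $bb\cdots b$ (any mixed cyclic word must contain an $a$ followed by a $b$ somewhere). Expanding ``no $ab$ subword'' via inclusion--exclusion produces exactly the signed sum over loops-plus-$2$-cycles LSDs, and the paper exhibits a weight- and sign-preserving bijection between the PIE terms and those LSDs. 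This replaces your external identity with a one-line combinatorial observation and is fully self-contained. What your route buys is an explicit closed form for the type~(A) contribution (the Waring sum), which the paper never writes down; what the paper's route buys is that the identity $a^n+b^n$ falls out for free without ever needing to name or prove the Waring formula.
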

 \begin{proof}
We prove this theorem for the case, $n=4$ (we can prove the general case in a similar way). For this case,  \[S=\left(\begin{array}{cccc}
 	a+b & -a&0  & b\\
 	-b & a+b  & a&0 \\
 	0&b&a+b&a\\
 	a &0& b & a+b \\
 	\end{array}
 	\right).\] \text{ Now, we have to prove that, } $\det(S)=2(a^4+b^4).$ Think $A=\{ a, b\}$ to be the set of letters. Let $C$ be the set of all cyclic words of length $4$ formed by $a, b.$ For cyclic words, we always take the starting point to be $1$ and the orientation to be clockwise. For example, Figure \ref{exam cyclic words}, contains three cyclic words of length $4,$ with starting point $1$ and orientation clockwise.     
 	\begin{figure}[ht!] 
 		\tiny 
 		\tikzstyle{ver}=[]
 		\tikzstyle{vert}=[circle, draw, fill=black!100, inner sep=0pt, minimum width=4pt]
 		\tikzstyle{vertex}=[circle, draw, fill=black!.5, inner sep=0pt, minimum width=4pt]
 		\tikzstyle{edge} = [draw,thick,-]
 		\tikzstyle{node_style} = [circle,draw=blue,fill=blue!20!,font=\sffamily\Large\bfseries]
 		\centering
 		\tikzset{->,>=stealth', auto,node distance=1cm,
 			thick,main node/.style={circle,draw,font=\sffamily\Large\bfseries}}
 		\tikzset{->-/.style={decoration={
 					markings,
 					mark=at position #1 with {\arrow{>}}},postaction={decorate}}}
 		
 		\begin{tikzpicture}[scale=1]
 		\tikzstyle{edge_style} = [draw=black, line width=2mm, ]
 		\tikzstyle{node_style} = [draw=blue,fill=blue!00!,font=\sffamily\Large\bfseries]
 		\tikzset{
 			LabelStyle/.style = { rectangle, rounded corners, draw,
 				minimum width = 2em, fill = yellow!50,
 				text = red, font = \bfseries },
 			VertexStyle/.append style = { inner sep=5pt,
 				font = \Large\bfseries},
 			EdgeStyle/.append style = {->, bend left} }
 		\tikzset{vertex/.style = {shape=circle,draw,minimum size=1.5em}}
 		\tikzset{edge/.style = {->,> = latex'}}
 		\draw[](-3,0) circle (1);
 		\fill[black!100!] (-3,-1) circle (.05);
 		\fill[black!100!] (-3,1.) circle (.05);
 		\fill[black!100!] (-4,0) circle (.05);
 		\fill[black!100!] (-2,0) circle (.05);
 		\node (B1) at (-3,-1.2)   {$\bf{a}$};
 		\node (B1) at (-3,1.2)   {$\bf{a}$};	
 		\node (B1) at (-1.8,0)   {$\bf{a}$};
 		\node (B1) at (-4.2,0)   {$\bf{a}$};
 		\node (B1) at (-3,.8)   {$\bf{1}$};
 		\node (B1) at (-2.2,0)   {$\bf{2}$};	
 		\node (B1) at (-3,-.8)   {$\bf{3}$};
 		\node (B1) at (-3.8, 0)   {$\bf{4}$};
 		\draw[](1,0) circle (1);
 		\fill[black!100!] (1,1) circle (.05);
 		\fill[black!100!] (2,0) circle (.05);
 		\fill[black!100!] (1,-1) circle (.05);
 		\fill[black!100!] (0,0) circle (.05);
 		\node (B1) at (1,1.2)   {$\bf{a}$};
 		\node (B1) at (2.2, 0)   {$\bf{b}$};	
 		\node (B1) at (1,-1.2)   {$\bf{a}$};
 		\node (B1) at (-.2,0)   {$\bf{b}$};
 		\node (B1) at (1,.8)   {$\bf{1}$};
 		\node (B1) at (1.8,0)   {$\bf{2}$};	
 		\node (B1) at (1,-.8)   {$\bf{3}$};
 		\node (B1) at (.2, 0)   {$\bf{4}$};
 		\node (B1) at (2, -1)   {$\bf{,}$};
 		\node (B1) at (-2, -1)   {$\bf{,}$};
 		\draw[](5,0) circle (1);
 		\fill[black!100!] (5,1) circle (.05);
 		\fill[black!100!] (6,0) circle (.05);
 		\fill[black!100!] (5,-1) circle (.05);
 		\fill[black!100!] (4,0) circle (.05);
 		\node (B1) at (5,1.2)   {$\bf{b}$};
 		\node (B1) at (6.2, 0)   {$\bf{b}$};	
 		\node (B1) at (5,-1.2)   {$\bf{a}$};
 		\node (B1) at (3.8,0)   {$\bf{b}$};
 		\node (B1) at (5,.8)   {$\bf{1}$};
 		\node (B1) at (5.8,0)   {$\bf{2}$};	
 		\node (B1) at (5,-.8)   {$\bf{3}$};
 		\node (B1) at (4.2, 0)   {$\bf{4}$};
 		\node (B1) at (8, -1)   {$\bf{etc...}$};
 		\end{tikzpicture}
 		\caption{This figure contains three cyclic words  formed by $a, b.$}
 		\label{exam cyclic words}
 	\end{figure}  
Now, let us introduce some notations. We will denote by $\mathbb{Z}C(n)$ the set of all formal linear combinations of cyclic words of length $n$ (with the starting point and orientation, already prescribed) with integer coefficients. For example, $c_1,c_2, c_1+c_2, 2c_1-c_2, 0 (\text{ the empty word })$ are some typical elements of $\mathbb{Z}C(n),$ where $c_1, c_2$ are cyclic words. Now let us consider Figure \ref{sum cyclic words}. One  can notice that none of the items in the left hand side of Figure \ref{sum cyclic words} is a cyclic word but can be thought of as an element of $\mathbb{Z}C(n)$ by the right hand side of the corresponding item. 
 \begin{figure}[ht!] 
 	\tiny 
 	\tikzstyle{ver}=[]
 	\tikzstyle{vert}=[circle, draw, fill=black!100, inner sep=0pt, minimum width=4pt]
 	\tikzstyle{vertex}=[circle, draw, fill=black!.5, inner sep=0pt, minimum width=4pt]
 	\tikzstyle{edge} = [draw,thick,-]
 	\tikzstyle{node_style} = [circle,draw=blue,fill=blue!20!,font=\sffamily\Large\bfseries]
 	\centering
 	\tikzset{->,>=stealth', auto,node distance=1cm,
 		thick,main node/.style={circle,draw,font=\sffamily\Large\bfseries}}
 	\tikzset{->-/.style={decoration={
 				markings,
 				mark=at position #1 with {\arrow{>}}},postaction={decorate}}}
 	\begin{tikzpicture}[scale=1]
 	\tikzstyle{edge_style} = [draw=black, line width=2mm, ]
 	\tikzstyle{node_style} = [draw=blue,fill=blue!00!,font=\sffamily\Large\bfseries]
 	\tikzset{
 		LabelStyle/.style = { rectangle, rounded corners, draw,
 			minimum width = 2em, fill = yellow!50,
 			text = red, font = \bfseries },
 		VertexStyle/.append style = { inner sep=5pt,
 			font = \Large\bfseries},
 		EdgeStyle/.append style = {->, bend left} }
 	\tikzset{vertex/.style = {shape=circle,draw,minimum size=1.5em}}
 	\tikzset{edge/.style = {->,> = latex'}}
 	\draw[](-3,3.5) circle (1);
 	\fill[black!100!] (-3,4.5) circle (.05);
 	\fill[black!100!] (-2,3.5) circle (.05);
 	\fill[black!100!] (-3,2.5) circle (.05);
 	\fill[black!100!] (-4,3.5) circle (.05);
 	\node (B1) at (-3,4.7)   {$\bf{a+b}$};
 	\node (B1) at (-1.8,3.5)   {$\bf{b}$};	
 	\node (B1) at (-3,2.3)   {$\bf{a}$};
 	\node (B1) at (-4.2,3.5)   {$\bf{b}$};
 	\node (B1) at (-3,4.3)   {$\bf{1}$};
 	\node (B1) at (-2.2,3.5)   {$\bf{2}$};	
 	\node (B1) at (-3,2.7)   {$\bf{3}$};
 	\node (B1) at (-3.8,3.5)   {$\bf{4}$};
 	%
 	\node(B1) at (-.9, 3.5) {$\bf{=}$};
 	\draw[](.7,3.5) circle (1);
 	\fill[black!100!] (.7,4.5) circle (.05);
 	\fill[black!100!] (1.7,3.5) circle (.05);
 	\fill[black!100!] (.7,2.5) circle (.05);
 	\fill[black!100!] (-.3,3.5) circle (.05);
 	\node (B1) at (.7,4.7)   {$\bf{a}$};
 	\node (B1) at (1.9, 3.5)   {$\bf{b}$};	
 	\node (B1) at (.7,2.3)   {$\bf{a}$};
 	\node (B1) at (-.5, 3.5)   {$\bf{b}$};
 	\node (B1) at (.7,4.3)   {$\bf{1}$};
 	\node (B1) at (1.5, 3.5)   {$\bf{2}$};	
 	\node (B1) at (.7,2.7)   {$\bf{3}$};
 	\node (B1) at (-.1,3.5)   {$\bf{4}$};
 	\node(B1) at (2.35, 3.5) {$\bf{+}$};
 	%
 	%
 	%
 	\draw[](4,3.5) circle (1);
 	\fill[black!100!] (4,4.5) circle (.05);
 	\fill[black!100!] (5,3.5) circle (.05);
 	\fill[black!100!] (4,2.5) circle (.05);
 	\fill[black!100!] (3,3.5) circle (.05);
 	\node (B1) at (4,4.7)   {$\bf{b}$};
 	\node (B1) at (5.2, 3.5)   {$\bf{b}$};	
 	\node (B1) at (4,2.3)   {$\bf{a}$};
 	\node (B1) at (2.8, 3.5)   {$\bf{b}$};
 	\node (B1) at (4,4.3)   {$\bf{1}$};
 	\node (B1) at (4.8, 3.5)   {$\bf{2}$};	
 	\node (B1) at (4,2.7)   {$\bf{3}$};
 	\node (B1) at (3.2,3.5)   {$\bf{4}$};
 	\draw[](-3,0) circle (1);
 	\fill[black!100!] (-3,-1) circle (.05);
 	\fill[black!100!] (-3,1) circle (.05);
 	\fill[black!100!] (-2,0) circle (.05);
 	\fill[black!100!] (-4,0) circle (.05);
 	\node (B1) at (-3,-1.2)   {$\bf{a}$};
 	\node (B1) at (-3,1.2)   {$\bf{a+b}$};	
 	\node (B1) at (-4.2,0)   {$\bf{a}$};
 	\node (B1) at (-1.5,0)   {$\bf{a+b}$};
 	\node (B1) at (-3,-.8)   {$\bf{3}$};
 	\node (B1) at (-3,.8)   {$\bf{1}$};	
 	\node (B1) at (-3.8,0)   {$\bf{4}$};
 	\node (B1) at (-2.2,0)   {$\bf{2}$};
 	\node(B1) at (-.9, 0) {$\bf{=}$};
 	\draw[](.7,0) circle (1);
 	\fill[black!100!] (.7,1) circle (.05);
 	\fill[black!100!] (1.7,0) circle (.05);
 	\fill[black!100!] (.7,-1) circle (.05);
 	\fill[black!100!] (-.3,0) circle (.05);
 	\node (B1) at (.7,1.2)   {$\bf{a}$};
 	\node (B1) at (1.9, 0)   {$\bf{a}$};	
 	\node (B1) at (.7,-1.2)   {$\bf{a}$};
 	\node (B1) at (-.5, 0)   {$\bf{a}$};
 	\node (B1) at (.7,.8)   {$\bf{1}$};
 	\node (B1) at (1.5, 0)   {$\bf{2}$};	
 	\node (B1) at (.7,-.8)   {$\bf{3}$};
 	\node (B1) at (-.1, 0)   {$\bf{4}$};
 	\node(B1) at (2.35, 0) {$\bf{+}$};
 	\draw[](4,0) circle (1);
 	\fill[black!100!] (4,1) circle (.05);
 	\fill[black!100!] (5,0) circle (.05);
 	\fill[black!100!] (4,-1) circle (.05);
 	\fill[black!100!] (3,0) circle (.05);
 	\node (B1) at (4,1.2)   {$\bf{a}$};
 	\node (B1) at (5.2, 0)   {$\bf{b}$};	
 	\node (B1) at (4,-1.2)   {$\bf{a}$};
 	\node (B1) at (2.8, 0)   {$\bf{a}$};
 	\node (B1) at (4,.8)   {$\bf{1}$};
 	\node (B1) at (4.8, 0)   {$\bf{2}$};	
 	\node (B1) at (4,-.8)   {$\bf{3}$};
 	\node (B1) at (3.2, 0)   {$\bf{4}$};
 	%
 	\node(B1) at (5.8, 0) {$\bf{+}$};
 	\draw[](7.5,0) circle (1);
 	\fill[black!100!] (7.5,1) circle (.05);
 	\fill[black!100!] (8.5,0) circle (.05);
 	\fill[black!100!] (7.5,-1) circle (.05);
 	\fill[black!100!] (6.5,0) circle (.05);
 	\node (B1) at (7.5,1.2)   {$\bf{b}$};
 	\node (B1) at (8.7, 0)   {$\bf{a}$};	
 	\node (B1) at (7.5,-1.2)   {$\bf{a}$};
 	\node (B1) at (6.3, 0)   {$\bf{a}$};
 	\node (B1) at (7.5,.8)   {$\bf{1}$};
 	\node (B1) at (8.3, 0)   {$\bf{2}$};	
 	\node (B1) at (7.5,-.8)   {$\bf{3}$};
 	\node (B1) at (6.7, 0)   {$\bf{4}$};
 	\node(B1) at (9.2, 0) {$\bf{+}$};
 	%
 	%
 	%
 	\draw[](11,0) circle (1);
 	\fill[black!100!] (11,1) circle (.05);
 	\fill[black!100!] (12,0) circle (.05);
 	\fill[black!100!] (11,-1) circle (.05);
 	\fill[black!100!] (10,0) circle (.05);
 	\node (B1) at (11,1.2)   {$\bf{b}$};
 	\node (B1) at (12.2, 0)   {$\bf{b}$};	
 	\node (B1) at (11,-1.2)   {$\bf{a}$};
 	\node (B1) at (9.8, 0)   {$\bf{a}$};
 	\node (B1) at (11,.8)   {$\bf{1}$};
 	\node (B1) at (11.8, 0)   {$\bf{2}$};	
 	\node (B1) at (11,-.8)   {$\bf{3}$};
 	\node (B1) at (10.2, 0)   {$\bf{4}$};
 	%
 	%
 	%
 	\draw[](-3,-3) circle (1);
 	\fill[black!100!] (-3,-2) circle (.05);
 	\fill[black!100!] (-2,-3) circle (.05);
 	\fill[black!100!] (-3,-4) circle (.05);
 	\fill[black!100!] (-4,-3) circle (.05);
 	\node (B1) at (-3,-1.8)   {$\bf{a+a}$};
 	\node (B1) at (-1.8,-3)   {$\bf{b}$};	
 	\node (B1) at (-3,-4.2)   {$\bf{a}$};
 	\node (B1) at (-4.2, -3)   {$\bf{a}$};
 	\node (B1) at (-3,-2.2)   {$\bf{1}$};
 	\node (B1) at (-2.2,-3)   {$\bf{2}$};	
 	\node (B1) at (-3,-3.8)   {$\bf{3}$};
 	\node (B1) at (-3.8,-3)   {$\bf{4}$};
 	\node(B1) at (-.9, -3) {$\bf{=}$};
 	\draw[](.7,-3) circle (1);
 	\fill[black!100!] (.7,-2) circle (.05);
 	\fill[black!100!] (1.7,-3) circle (.05);
 	\fill[black!100!] (.7,-4) circle (.05);
 	\fill[black!100!] (-.3,-3) circle (.05);
 	\node (B1) at (.7,-1.8)   {$\bf{a}$};
 	\node (B1) at (1.9, -3)   {$\bf{b}$};	
 	\node (B1) at (.7,-4.2)   {$\bf{a}$};
 	\node (B1) at (-.5, -3)   {$\bf{a}$};
 	\node (B1) at (.7,-2.2)   {$\bf{1}$};
 	\node (B1) at (1.5, -3)   {$\bf{2}$};	
 	\node (B1) at (.7,-3.8)   {$\bf{3}$};
 	\node (B1) at (-.1, -3)   {$\bf{4}$};
 	\node(B1) at (2.35, -3) {$\bf{+}$};
 	%
 	%
 	%
 	\draw[](4,-3) circle (1);
 	\fill[black!100!] (4,-2) circle (.05);
 	\fill[black!100!] (5,-3) circle (.05);
 	\fill[black!100!] (4,-4) circle (.05);
 	\fill[black!100!] (3,-3) circle (.05);
 	\node (B1) at (4,-1.8)   {$\bf{a}$};
 	\node (B1) at (5.2, -3)   {$\bf{b}$};	
 	\node (B1) at (4,-4.2)   {$\bf{a}$};
 	\node (B1) at (2.8, -3)   {$\bf{a}$};
 	\node (B1) at (4,-2.2)   {$\bf{1}$};
 	\node (B1) at (4.8, -3)   {$\bf{2}$};	
 	\node (B1) at (4,-3.8)   {$\bf{3}$};
 	\node (B1) at (3.2, -3)   {$\bf{4}$};
 	%
 	%
 	%
 	\node (B1) at (12, -4)   {$\bf{etc...}$};
 	\end{tikzpicture}
 	\caption{}
 	\label{sum cyclic words}
 \end{figure}	
We want to evaluate the sum of all possible cyclic words of length $4,$ formed by $a, b,$ such that $ab$ does not appear as a sub word, i.e. $a$ and $b$ do not occur as a consecutive pair (note that $ba$ may appear as a sub word).
Figure \ref{avoiding word} shows some cyclic words containing $ab$ as a sub word.
 	\begin{figure}[ht!] 
 		\tiny 
 		\tikzstyle{ver}=[]
 		\tikzstyle{vert}=[circle, draw, fill=black!100, inner sep=0pt, minimum width=4pt]
 		\tikzstyle{vertex}=[circle, draw, fill=black!.5, inner sep=0pt, minimum width=4pt]
 		\tikzstyle{edge} = [draw,thick,-]
 		\tikzstyle{node_style} = [circle,draw=blue,fill=blue!20!,font=\sffamily\Large\bfseries]
 		\centering
 		\tikzset{->,>=stealth', auto,node distance=1cm,
 			thick,main node/.style={circle,draw,font=\sffamily\Large\bfseries}}
 		\tikzset{->-/.style={decoration={
 					markings,
 					mark=at position #1 with {\arrow{>}}},postaction={decorate}}}
 	\begin{tikzpicture}[scale=1]
 		\tikzstyle{edge_style} = [draw=black, line width=2mm, ]
 		\tikzstyle{node_style} = [draw=blue,fill=blue!00!,font=\sffamily\Large\bfseries]
 		\tikzset{
 			LabelStyle/.style = { rectangle, rounded corners, draw,
 				minimum width = 2em, fill = yellow!50,
 				text = red, font = \bfseries },
 			VertexStyle/.append style = { inner sep=5pt,
 				font = \Large\bfseries},
 			EdgeStyle/.append style = {->, bend left} }
 		\tikzset{vertex/.style = {shape=circle,draw,minimum size=1.5em}}
 		\tikzset{edge/.style = {->,> = latex'}}
 		\draw[](-3,0) circle (1);
 		\fill[black!100!] (-3,-1) circle (.05);
 		\fill[black!100!] (-3,1) circle (.05);
 		\fill[black!100!] (-2,0) circle (.05);
 		\fill[black!100!] (-4,0) circle (.05);
 		\node (B1) at (-3,-1.2)   {$\bf{b}$};
 		\node (B1) at (-3,1.2)   {$\bf{a}$};	
 		\node (B1) at (-4.2,0)   {$\bf{b}$};
 		\node (B1) at (-1.8,0)   {$\bf{b}$};
 		\node (B1) at (-3,-.8)   {$\bf{3}$};
 		\node (B1) at (-3,.8)   {$\bf{1}$};	
 		\node (B1) at (-3.8,0)   {$\bf{4}$};
 		\node (B1) at (-2.2,0)   {$\bf{2}$};
 		\node(B1) at (-1.8, -.8) {$\bf{,}$};
 		\draw[](2,0) circle (1);
 		\fill[black!100!] (2,1) circle (.05);
 		\fill[black!100!] (3,0) circle (.05);
 		\fill[black!100!] (2,-1) circle (.05);
 		\fill[black!100!] (1,0) circle (.05);
 		\node (B1) at (2,1.2)   {$\bf{a}$};
 		\node (B1) at (3.2, 0)   {$\bf{a}$};	
 		\node (B1) at (2,-1.2)   {$\bf{b}$};
 		\node (B1) at (.8, 0)   {$\bf{b}$};
 		\node (B1) at (2,.8)   {$\bf{1}$};
 		\node (B1) at (2.8, 0)   {$\bf{2}$};	
 		\node (B1) at (2,-.8)   {$\bf{3}$};
 		\node (B1) at (1.2, 0)   {$\bf{4}$};
 		\node(B1) at (3.2, -.8) {$\bf{,}$};
 		%
 		\draw[](7,0) circle (1);
 		\fill[black!100!] (7,1) circle (.05);
 		\fill[black!100!] (8,0) circle (.05);
 		\fill[black!100!] (7,-1) circle (.05);
 		\fill[black!100!] (6,0) circle (.05);
 		\node (B1) at (7,1.2)   {$\bf{b}$};
 		\node (B1) at (8.2, 0)   {$\bf{a}$};	
 		\node (B1) at (7,-1.2)   {$\bf{b}$};
 		\node (B1) at (5.8, 0)   {$\bf{a}$};
 		\node (B1) at (7,.8)   {$\bf{1}$};
 		\node (B1) at (7.8, 0)   {$\bf{2}$};	
 		\node (B1) at (7,-.8)   {$\bf{3}$};
 		\node (B1) at (6.2, 0)   {$\bf{4}$};
 		%
 		\node(B1) at (9.7, -1) {$\bf{etc...}$};
 		\end{tikzpicture}
 		\caption{These are some  cyclic words, where  $ab$  appears as a sub word. For example, in the first cyclic word the letters $a, b$ occupy two consecutive positions $1, 2$ respectively, whereas the last cyclic word contains two pairs of consecutive positions $2, 3$ and $4, 1$ occupied by $ab.$ }
 		\label{avoiding word}
\end{figure}
Let $C_0 (\in \mathbb{Z}C(4))$ be the formal sum of all cyclic words with constant coefficient $1,$ i.e., $\sum\limits_{\ell \in C}\ell.$ 
Now the sum of all possible cyclic words of length $4,$ such that the letters $a, b$ do not occupy two consecutive positions on the circle is clearly described by Figure \ref{req word}. Again, we can calculate this sum by PIE rule. By PIE rule, the  sum of all cyclic words avoiding $ab$ as a sub word is $C_0-C_1+C_2,$ where $C_1 $ is the sum of all cyclic words of length $4,$ formed by letters $a, b$ such that, there is at least one pair of consecutive positions (i.e. either $1, 2$ or $2, 3$ or $3, 4$ or $4, 1$ on circle,) occupied by $ab.$ For an illustration, see Figure \ref{avoiding word}. $C_2$ is the sum of all cyclic words of length $4,$ formed by letters $a, b$ such that, there is at least two pairs of consecutive positions (for example, $1, 2$ and $3, 4$ or $2, 3$ and $4, 1$ etc.,) occupied by $ab.$ See the last item of Figure  \ref{avoiding word} for an illustration. 
 \begin{figure}[ht!] 
\tiny  
\tikzstyle{ver}=[]
\tikzstyle{vert}=[circle, draw, fill=black!100, inner sep=0pt, minimum width=4pt]
 		\tikzstyle{vertex}=[circle, draw, fill=black!.5, inner sep=0pt, minimum width=4pt]
 		\tikzstyle{edge} = [draw,thick,-]
 		\tikzstyle{node_style} = [circle,draw=blue,fill=blue!20!,font=\sffamily\Large\bfseries]
 		\centering
 		\tikzset{->,>=stealth', auto,node distance=1cm,
 			thick,main node/.style={circle,draw,font=\sffamily\Large\bfseries}}
 		\tikzset{->-/.style={decoration={
 					markings,
 					mark=at position #1 with {\arrow{>}}},postaction={decorate}}}
 	\begin{tikzpicture}[scale=1]
 		\tikzstyle{edge_style} = [draw=black, line width=2mm, ]
 		\tikzstyle{node_style} = [draw=blue,fill=blue!00!,font=\sffamily\Large\bfseries]
 		\tikzset{
 			LabelStyle/.style = { rectangle, rounded corners, draw,
 				minimum width = 2em, fill = yellow!50,
 				text = red, font = \bfseries },
 			VertexStyle/.append style = { inner sep=5pt,
 				font = \Large\bfseries},
 			EdgeStyle/.append style = {->, bend left} }
 		\tikzset{vertex/.style = {shape=circle,draw,minimum size=1.5em}}
 		\tikzset{edge/.style = {->,> = latex'}}
 		\draw[](2,0) circle (1);
 		\fill[black!100!] (2,1) circle (.05);
 		\fill[black!100!] (3,0) circle (.05);
 		\fill[black!100!] (2,-1) circle (.05);
 		\fill[black!100!] (1,0) circle (.05);
 		\node (B1) at (2,1.2)   {$\bf{a}$};
 		\node (B1) at (3.2, 0)   {$\bf{a}$};	
 		\node (B1) at (2,-1.2)   {$\bf{a}$};
 		\node (B1) at (.8, 0)   {$\bf{a}$};
 		\node (B1) at (2,.8)   {$\bf{1}$};
 		\node (B1) at (2.8, 0)   {$\bf{2}$};	
 		\node (B1) at (2,-.8)   {$\bf{3}$};
 		\node (B1) at (1.2, 0)   {$\bf{4}$};
 		%
 		\draw[](6,0) circle (1);
 		\fill[black!100!] (6,1) circle (.05);
 		\fill[black!100!] (7,0) circle (.05);
 		\fill[black!100!] (6,-1) circle (.05);
 		\fill[black!100!] (5,0) circle (.05);
 		\node (B1) at (6,1.2)   {$\bf{b}$};
 		\node (B1) at (7.2, 0)   {$\bf{b}$};	
 		\node (B1) at (6,-1.2)   {$\bf{b}$};
 		\node (B1) at (4.8, 0)   {$\bf{b}$};
 		\node (B1) at (6,.8)   {$\bf{1}$};
 		\node (B1) at (6.8, 0)   {$\bf{2}$};	
 		\node (B1) at (6,-.8)   {$\bf{3}$};
 		\node (B1) at (5.2, 0)   {$\bf{4}$};
 		\node(B1) at (4, 0) {$\bf{+}$};
 		%
 		\end{tikzpicture}
 		\caption{}
 		\label{req word}
 	\end{figure}
Now we show a sign and weight preserving bijection between the terms in $C_0-C_1+C_2$ and LSD in the set $\mathcal{L}\setminus \{L_1, L_2\},$ where  $\mathcal{L}$ is the collection of all LSD in $D(S)$ and $L_1, L_2$ are two LSD described in Figure \ref{fig:f2}.
\begin{figure}[ht!]
	\tiny
	\tikzstyle{ver}=[]
	\tikzstyle{vert}=[circle, draw, fill=black!100, inner sep=0pt, minimum width=4pt]
	\tikzstyle{vertex}=[circle, draw, fill=black!.5, inner sep=0pt, minimum width=4pt]
	\tikzstyle{edge} = [draw,thick,-]
	\tikzstyle{node_style} = [circle,draw=blue,fill=blue!20!,font=\sffamily\Large\bfseries]
	\centering
	\tikzset{->,>=stealth', auto,node distance=1cm,
		thick,main node/.style={circle,draw,font=\sffamily\Large\bfseries}}
	\tikzset{->-/.style={decoration={
				markings,
				mark=at position #1 with {\arrow{>}}},postaction={decorate}}}
	\begin{tikzpicture}[scale=1]
	\tikzstyle{edge_style} = [draw=black, line width=2mm, ]
	\tikzstyle{node_style} = [draw=blue,fill=blue!00!,font=\sffamily\Large\bfseries]
	\tikzset{
		LabelStyle/.style = { rectangle, rounded corners, draw,
			minimum width = 2em, fill = yellow!50,
			text = red, font = \bfseries },
		VertexStyle/.append style = { inner sep=5pt,
			font = \Large\bfseries},
		EdgeStyle/.append style = {->, bend left} }
	\tikzset{vertex/.style = {shape=circle,draw,minimum size=1.5em}}
	\tikzset{edge/.style = {->,> = latex'}}
	\node[] (a) at  (3,-3) {$\bf{1}$};
	\node[] (b) at  (6.5,-3) {$\bf{2}$};
	\node[] (c) at  (9.5,-3) {$\bf{3}$};
	\node[] (d) at  (12.5,-3) {$\bf{4}$};
	\fill[black!100!](3.2,-3) circle (.05);
	\fill[black!100!](6.5,-2.8) circle (.05);
	\fill[black!100!](9.5,-2.8) circle (.05);
	\fill[black!100!](12.3,-3) circle (.05);
	\draw[edge] (a)  to[bend left] (b);
	\draw[edge] (b)  to[bend left] (c);
	\draw[edge] (c)  to[bend left] (d);
	\draw[edge] (d)  to[bend left] (a);
	\node (B1) at (4.8, -2.2)   {$\bf{-a}$};
	\node (B1) at (7.2, -2.4)   {$\bf{a}$};
	\node (B1) at (11.2, -2.3)   {$\bf{a}$};
	\node (B1) at (4.8, -3.5)   {$\bf{a}$};
	\node[] (c) at  (3,-6.9) {$\bf{1}$};
	\node[] (d) at  (6.5,-7.1) {$\bf{2}$};
	\node[] (e) at  (9.5,-7.1) {$\bf{3}$};
	\node[] (f) at  (12.5,-7.1) {$\bf{4}$};
	\fill[black!100!](3.2,-6.9) circle (.05);
	\fill[black!100!](6.5,-7.3) circle (.05);
	\fill[black!100!](9.5,-7.3) circle (.05);
	\fill[black!100!](12.3,-7.1) circle (.05);
	\draw[edge] (c)  to[bend left] (f);
	\draw[edge] (f)  to[bend left] (e);
	\draw[edge] (e)  to[bend left] (d);
	\draw[edge] (d)  to[bend left] (c);
	\node (B1) at (6, -5.5)   {$\bf{b}$}; 
	\node (B1) at (7.5, -7)   {$\bf{b}$}; 
	\node (B1) at (5.2, -7)   {$\bf{-b}$}; 
	\node (B1) at (11, -7.2)   {$\bf{b}$}; 
	\end{tikzpicture}
	\caption{This figure contains two LSD of $D(S)$ $L_1 $ (above) and $L_2$ (below).  The numbers appearing  on each edge  in the above figure is the weight of the corresponding edge.}
	\label{fig:f2}	
\end{figure}
The \emph{weight of a cyclic word} $C,$ denoted by $w(C)$ is the product (here product is usual polynomial product) of all letters present in that cyclic word and extend this to $\mathbb{Z}C(n)$ by linearity, i.e., for any two cyclic words $\ell$ and $\acute{\ell},$ $w(x\ell+y\acute{\ell}):=xw(\ell)+yw(\acute{\ell}), x, y \in \mathbb{Z}.$ See Figures \ref{weight of word} and \ref{additivity of weight} for illustration.
\begin{figure}[ht!] 
 		\tiny 
 		\tikzstyle{ver}=[]
 		\tikzstyle{vert}=[circle, draw, fill=black!100, inner sep=0pt, minimum width=4pt]
 		\tikzstyle{vertex}=[circle, draw, fill=black!.5, inner sep=0pt, minimum width=4pt]
 		\tikzstyle{edge} = [draw,thick,-]
 		\tikzstyle{node_style} = [circle,draw=blue,fill=blue!20!,font=\sffamily\Large\bfseries]
 		\tikzset{round left paren/.style={ncbar=0.5cm,out=120,in=-120}}
 		\tikzset{round right paren/.style={ncbar=0.5cm,out=60,in=-60}}
 		\begin{tikzpicture}[scale=1]
 		\tikzstyle{edge_style} = [draw=black, line width=2mm, ]
 		\tikzstyle{node_style} = [draw=blue,fill=blue!00!,font=\sffamily\Large\bfseries]
 		\tikzset{
 			LabelStyle/.style = { rectangle, rounded corners, draw,
 				minimum width = 2em, fill = yellow!50,
 				text = red, font = \bfseries },
 			VertexStyle/.append style = { inner sep=5pt,
 				font = \Large\bfseries},
 			EdgeStyle/.append style = {->, bend left} }
 		\tikzset{vertex/.style = {shape=circle,draw,minimum size=1.5em}}
 		\tikzset{edge/.style = {->,> = latex'}}
 		\tikzset{
 			ncbar angle/.initial=90,
 			ncbar/.style={
 				to path=(\tikztostart)
 				-- ($(\tikztostart)!#1!\pgfkeysvalueof{/tikz/ncbar angle}:(\tikztotarget)$)
 				-- ($(\tikztotarget)!($(\tikztostart)!#1!\pgfkeysvalueof{/tikz/ncbar angle}:(\tikztotarget)$)!\pgfkeysvalueof{/tikz/ncbar angle}:(\tikztostart)$)
 				-- (\tikztotarget)
 			},
 			ncbar/.default=0.5cm,
 		}
 		\draw[](-3,0) circle (1);
 		\node (B1) at (-4.8,0)   {$\bf{w}$};
 		\draw [] (-4.2,-1.2) to [round left paren ] (-4.2,1.2);
 		\draw [] (-1.8,-1.2) to [round right paren ] (-1.8,1.2);
 		\node (B1) at (-.2,0)   {$\bf{=abbb(=ab^3)}$};
 		\fill[black!100!] (-3,-1) circle (.05);
 		\fill[black!100!] (-3,1) circle (.05);
 		\fill[black!100!] (-2,0) circle (.05);
 		\fill[black!100!] (-4,0) circle (.05);
 		\node (B1) at (-3,-1.2)   {$\bf{b}$};
 		\node (B1) at (-3,1.2)   {$\bf{a}$};	
 		\node (B1) at (-4.2,0)   {$\bf{b}$};
 		\node (B1) at (-1.8,0)   {$\bf{b}$};
 		\node (B1) at (-3,-.8)   {$\bf{3}$};
 		\node (B1) at (-3,.8)   {$\bf{1}$};	
 		\node (B1) at (-3.8,0)   {$\bf{4}$};
 		\node (B1) at (-2.2,0)   {$\bf{2}$};
 		%
 		\draw[](4,0) circle (1);
 		\node (B1) at (1,-.1)   {$\bf{,}$};
 		\node (B1) at (2.2,0)   {$\bf{w}$};
 		\draw [] (2.9,-1.2) to [round left paren ] (2.9,1.2);
 		\draw [] (5.2,-1.2) to [round right paren ] (5.2,1.2);
 		\node (B1) at (6.8,0)   {$\bf{=baba(=a^2b^2)}$};
 		\fill[black!100!] (4,1) circle (.05);
 		\fill[black!100!] (5,0) circle (.05);
 		\fill[black!100!] (4,-1) circle (.05);
 		\fill[black!100!] (3,0) circle (.05);
 		\node (B1) at (4,1.2)   {$\bf{b}$};
 		\node (B1) at (5.2, 0)   {$\bf{a}$};	
 		\node (B1) at (4,-1.2)   {$\bf{b}$};
 		\node (B1) at (2.8, 0)   {$\bf{a}$};
 		\node (B1) at (4,.8)   {$\bf{1}$};
 		\node (B1) at (4.8, 0)   {$\bf{2}$};	
 		\node (B1) at (4,-.8)   {$\bf{3}$};
 		\node (B1) at (3.2, 0)   {$\bf{4}$};
 		%
 		\node(B1) at (9.7, -1) {$\bf{etc.}$};
 		\end{tikzpicture}
 		\caption{Two cyclic words with their weights.}
 		\label{weight of word}
 	\end{figure}
 	\begin{figure}[ht!] 
 		\tiny 
 		\tikzstyle{ver}=[]
 		\tikzstyle{vert}=[circle, draw, fill=black!100, inner sep=0pt, minimum width=4pt]
 		\tikzstyle{vertex}=[circle, draw, fill=black!.5, inner sep=0pt, minimum width=4pt]
 		\tikzstyle{edge} = [draw,thick,-]
 		\tikzstyle{node_style} = [circle,draw=blue,fill=blue!20!,font=\sffamily\Large\bfseries]
 		\tikzset{round left paren/.style={ncbar=0.5cm,out=120,in=-120}}
 		\tikzset{round right paren/.style={ncbar=0.5cm,out=60,in=-60}}
 		\begin{tikzpicture}[scale=1]
 		\tikzstyle{edge_style} = [draw=black, line width=2mm, ]
 		\tikzstyle{node_style} = [draw=blue,fill=blue!00!,font=\sffamily\Large\bfseries]
 		\tikzset{
 			LabelStyle/.style = { rectangle, rounded corners, draw,
 				minimum width = 2em, fill = yellow!50,
 				text = red, font = \bfseries },
 			VertexStyle/.append style = { inner sep=5pt,
 				font = \Large\bfseries},
 			EdgeStyle/.append style = {->, bend left} }
 		\tikzset{vertex/.style = {shape=circle,draw,minimum size=1.5em}}
 		\tikzset{edge/.style = {->,> = latex'}}
 		\tikzset{
 			ncbar angle/.initial=90,
 			ncbar/.style={
 				to path=(\tikztostart)
 				-- ($(\tikztostart)!#1!\pgfkeysvalueof{/tikz/ncbar angle}:(\tikztotarget)$)
 				-- ($(\tikztotarget)!($(\tikztostart)!#1!\pgfkeysvalueof{/tikz/ncbar angle}:(\tikztotarget)$)!\pgfkeysvalueof{/tikz/ncbar angle}:(\tikztostart)$)
 				-- (\tikztotarget)
 			},
 			ncbar/.default=0.5cm,
 		}
 		\draw[](-3,3.5) circle (1);
 		\node (B1) at (-4.7,3.5)   {$\bf{w}$};
 		\draw [] (-4.1,2.2) to [round left paren ] (-4.1,4.7);
 		\draw [] (-2,2.2) to [round right paren ] (-2,4.7);
 		\node (B1) at (-.9,3.5)   {$\bf{w}$};
 		\draw [] (-.3,2.2) to [round left paren ] (-.3,4.7);
 		\draw [] (5.1,2.2) to [round right paren ] (5.1,4.7);
 		\fill[black!100!] (-3,4.5) circle (.05);
 		\fill[black!100!] (-2,3.5) circle (.05);
 		\fill[black!100!] (-3,2.5) circle (.05);
 		\fill[black!100!] (-4,3.5) circle (.05);
 		\node (B1) at (-3,4.7)   {$\bf{a+b}$};
 		\node (B1) at (-1.8,3.5)   {$\bf{b}$};	
 		\node (B1) at (-3,2.3)   {$\bf{a}$};
 		\node (B1) at (-4.2,3.5)   {$\bf{b}$};
 		\node (B1) at (-3,4.3)   {$\bf{1}$};
 		\node (B1) at (-2.2,3.5)   {$\bf{2}$};	
 		\node (B1) at (-3,2.7)   {$\bf{3}$};
 		\node (B1) at (-3.8,3.5)   {$\bf{4}$};
 		%
 		\node(B1) at (-1.4, 3.5) {$\bf{=}$};
 		\node(B1) at (-1.4, .5) {$\bf{=}$};
 		\draw[](.7,3.5) circle (1);
 		\fill[black!100!] (.7,4.5) circle (.05);
 		\fill[black!100!] (1.7,3.5) circle (.05);
 		\fill[black!100!] (.7,2.5) circle (.05);
 		\fill[black!100!] (-.3,3.5) circle (.05);
 		\node (B1) at (.7,4.7)   {$\bf{a}$};
 		\node (B1) at (1.9, 3.5)   {$\bf{b}$};	
 		\node (B1) at (.7,2.3)   {$\bf{a}$};
 		\node (B1) at (-.5, 3.5)   {$\bf{b}$};
 		\node (B1) at (.7,4.3)   {$\bf{1}$};
 		\node (B1) at (1.5, 3.5)   {$\bf{2}$};	
 		\node (B1) at (.7,2.7)   {$\bf{3}$};
 		\node (B1) at (-.1,3.5)   {$\bf{4}$};
 		\node(B1) at (2.35, 3.5) {$\bf{+}$};
 		%
 		%
 		%
 		\draw[](4,3.5) circle (1);
 		\fill[black!100!] (4,4.5) circle (.05);
 		\fill[black!100!] (5,3.5) circle (.05);
 		\fill[black!100!] (4,2.5) circle (.05);
 		\fill[black!100!] (3,3.5) circle (.05);
 		\node (B1) at (4,4.7)   {$\bf{b}$};
 		\node (B1) at (5.2, 3.5)   {$\bf{b}$};	
 		\node (B1) at (4,2.3)   {$\bf{a}$};
 		\node (B1) at (2.8, 3.5)   {$\bf{b}$};
 		\node (B1) at (4,4.3)   {$\bf{1}$};
 		\node (B1) at (4.8, 3.5)   {$\bf{2}$};	
 		\node (B1) at (4,2.7)   {$\bf{3}$};
 		\node (B1) at (3.2,3.5)   {$\bf{4}$};
 		\draw[](.7,.5) circle (1);
 		\fill[black!100!] (.7,1.5) circle (.05);
 		\fill[black!100!] (1.7,.5) circle (.05);
 		\fill[black!100!] (.7,-.5) circle (.05);
 		\fill[black!100!] (-.3,.5) circle (.05);
 		\node (B1) at (.7,1.7)   {$\bf{a}$};
 		\node (B1) at (1.9, .5)   {$\bf{b}$};	
 		\node (B1) at (.7,-.8)   {$\bf{a}$};
 		\node (B1) at (-.5, .5)   {$\bf{b}$};
 		\node (B1) at (.7,1.3)   {$\bf{1}$};
 		\node (B1) at (1.5, .5)   {$\bf{2}$};	
 		\node (B1) at (.7,-.3)   {$\bf{3}$};
 		\node (B1) at (-.1,.5)   {$\bf{4}$};
 		\node(B1) at (2.7, .5) {$\bf{+}$};
 		\node(B1) at (3.3, .5) {$\bf{w}$};
 		%
 		%
 		%
 		\draw[](5,.5) circle (1);
 		\fill[black!100!] (5,1.5) circle (.05);
 		\fill[black!100!] (6,.5) circle (.05);
 		\fill[black!100!] (5,-.5) circle (.05);
 		\fill[black!100!] (4,.5) circle (.05);
 		\node (B1) at (5,1.7)   {$\bf{b}$};
 		\node (B1) at (6.2, .5)   {$\bf{b}$};	
 		\node (B1) at (5,-.8)   {$\bf{a}$};
 		\node (B1) at (3.8, .5)   {$\bf{b}$};
 		\node (B1) at (5,1.3)   {$\bf{1}$};
 		\node (B1) at (5.8, .5)   {$\bf{2}$};	
 		\node (B1) at (5,-.3)   {$\bf{3}$};
 		\node (B1) at (4.2,.5)   {$\bf{4}$};
 		\node (B1) at (-.9,.5)   {$\bf{w}$};
 		\draw [] (-.3,-.7) to [round left paren ] (-.3,1.7);
 		\draw [] (1.7,-.7) to [round right paren ] (1.7,1.7);
 		\draw [] (3.9,-.7) to [round left paren ] (3.9,1.7);
 		\draw [] (6.1,-.7) to [round right paren ] (6.1,1.7);
 		\node(B1) at (1.8, -1.3) {$\bf{=} abab(=a^2b^2)+bbab(=b^3a)=a^2b^2+ab^3$};
 		\end{tikzpicture}
 		\caption{ This figure describes the weights of the sum of cyclic words}
 		\label{additivity of weight}	
 	\end{figure}
 	Clearly, Figure \ref{fig:f3} shows the required bijection. Also this bijection is sign and weight preserving. Again $(-1)^{4-1}w(L_1)=a^4$ and $(-1)^{4-1}w(L_2)=b^4.$  Hence $\det(S)=2(a^4+b^4).$ 
 	\begin{figure}[ht!]
 		\tiny
 		\tikzstyle{ver}=[]
 		\tikzstyle{vert}=[circle, draw, fill=black!100, inner sep=0pt, minimum width=4pt]
 		\tikzstyle{vertex}=[circle, draw, fill=black!.5, inner sep=0pt, minimum width=4pt]
 		\tikzstyle{edge} = [draw,thick,-]
 		\tikzstyle{node_style} = [circle,draw=blue,fill=blue!20!,font=\sffamily\Large\bfseries]
 		\tikzset{->-/.style={decoration={
 					markings,
 					mark=at position #1 with {\arrow{>}}},postaction={decorate}}}
 		\begin{tikzpicture}[scale=1]
 		\tikzstyle{edge_style} = [draw=black, line width=2mm, ]
 		\tikzstyle{node_style} = [draw=blue,fill=blue!00!,font=\sffamily\Large\bfseries]
 		\tikzset{
 			LabelStyle/.style = { rectangle, rounded corners, draw,
 				minimum width = 2em, fill = yellow!50,
 				text = red, font = \bfseries },
 			VertexStyle/.append style = { inner sep=5pt,
 				font = \Large\bfseries},
 			EdgeStyle/.append style = {->, bend left} }
 		\tikzset{vertex/.style = {shape=circle,draw,minimum size=1.5em}}
 		\tikzset{edge/.style = {->,> = latex'}}
 		\draw[](-3,0) circle (1);
 		\fill[black!100!] (-3,1) circle (.05);
 		\fill[black!100!] (-2,0) circle (.05);
 		\fill[black!100!] (-3,-1) circle (.05);
 		\fill[black!100!] (-4,0) circle (.05);
 		\node (B1) at (-3,1.2)   {$\bf{(a+b)}$};
 		\node (B1) at (-1.4,0)   {$\bf{(a+b)}$};	
 		\node (B1) at (-3,-1.2)   {$\bf{(a+b)}$};
 		\node (B1) at (-4.6,0)   {$\bf{(a+b)}$};
 		\node (B1) at (-3,.8)   {$\bf{1}$};
 		\node (B1) at (-2.2,0)   {$\bf{2}$};	
 		\node (B1) at (-3,-.8)   {$\bf{3}$};
 		\node (B1) at (-3.8,0)   {$\bf{4}$};
 		\draw[<->, line width=.2 mm] (0,0) -- (.8,0);

 		\draw[](6,1) circle (.5);
 		\draw[] (6.5,1) -- (6.3,1.2);
 		\draw[] (6.5,1) -- (6.6,1.2);
 		\node (B1) at (6,.2)   {$\bf{1}$};
 		\fill[black!100!](6,.5) circle (.05);
 	\node (B1) at (6,1.8)   {$\bf{(a+b)}$};
 		\draw[](6,-1.5) circle (.5);
 		\draw[] (6.5,-1.5) -- (6.2,-1.2);
 		\draw[] (6.5,-1.5) -- (6.6,-1.2);
 		\node (B1) at (6,-2.2)   {$\bf{3}$};
 		\fill[black!100!](6,-2) circle (.05);
 		\node (B1) at (6,-.8)   {$\bf{(a+b)}$};
 	\draw[](8,0) circle (.5);
 	\draw[] (8.5,0) -- (8.3,.2);
 	\draw[] (8.5,0) -- (8.6,.2);
 	\node (B1) at (8,-.8)   {$\bf{2}$};
 	\fill[black!100!](8,-.5) circle (.05);
 	\node (B1) at (8,.8)   {$\bf{(a+b)}$};
 		\draw[](4,0) circle (.5);
 			\draw[] (4.5,0) -- (4.3,.2);
 			\draw[] (4.5,0) -- (4.6,.2);
 			\node (B1) at (4,-.8)   {$\bf{4}$};
 			\fill[black!100!](4,-.5) circle (.05);
 			\node (B1) at (4,.8)   {$\bf{(a+b)}$};
 			%
 		%
 		\draw[](-3,-4.5) circle (1);
 		\fill[black!100!] (-3,-3.5) circle (.05);
 		\fill[black!100!] (-4,-4.5) circle (.05);
 		\fill[black!100!] (-3,-5.5) circle (.05);
 		\fill[black!100!] (-2,-4.5) circle (.05);
 		\node (B1) at (-3,-3.3)   {$\bf{a}$};
 		\node (B1) at (-1.8,-4.5)   {$\bf{b}$};	
 		\node (B1) at (-3,-5.8)   {$\bf{(a+b)}$};
 		\node (B1) at (-4.6,-4.5)   {$\bf{(a+b)}$};
 		\node (B1) at (-3,-3.8)   {$\bf{1}$};
 		\node (B1) at (-2.2,-4.5)   {$\bf{2}$};	
 		\node (B1) at (-3,-5.3)   {$\bf{3}$};
 		\node (B1) at (-3.8,-4.5)   {$\bf{4}$};
 		\draw[<->, line width=.2 mm] (-.5,-4.5) -- (.5,-4.5);
 		\node[] (a) at  (6,-3.5) {$\bf{1}$};
 		\node[] (b) at  (8,-4.5) {$\bf{2}$};
 		\node[] (c) at  (4,-4.7) {$\bf{4}$};
 		\draw[edge] (a)  to[bend left] (b);
 		\draw[edge] (b)  to[bend left] (a);
 		\fill[black!100!](6.15,-3.6) circle (.05);
 		\fill[black!100!](7.82,-4.43) circle (.05);
 		\node (B1) at (7,-3.3)   {$\bf{-a}$};
 		\node (B1) at (7,-4.2)   {$\bf{-b}$};
\draw[](6.5,-6) circle (.5);
 			\draw[] (7,-6) -- (7.1,-5.8);
 			\draw[] (7,-6) -- (6.8,-5.8);
 			\node (B1) at (6.5,-6.7)   {$\bf{3}$};
 			\fill[black!100!](6.5,-6.5) circle (.05);
 			\node (B1) at (6.5,-5.3)   {$\bf{(a+b)}$};
 			\draw[](4,-4) circle (.5);
 		\draw[] (4.5,-4) -- (4.6,-3.8);
 		\draw[] (4.5,-4) -- (4.3,-3.8);
 			%
 		\fill[black!100!](4, -4.5) circle (.05);
 			\node (B1) at (4,-3.3)   {$\bf{(a+b)}$};
 	\draw[](-3,-9) circle (1);
 		\fill[black!100!] (-3,-8) circle (.05);
 		\fill[black!100!] (-2,-9) circle (.05);
 		\fill[black!100!] (-3,-10) circle (.05);
 		\fill[black!100!] (-4,-9) circle (.05);
 		\node (B1) at (-3,-7.8)   {$\bf{b}$};
 		\node (B1) at (-1.5,-9)   {$\bf{a+b}$};	
 		\node (B1) at (-3,-10.2)   {$\bf{(a+b)}$};
 		\node (B1) at (-4.2,-9)   {$\bf{a}$};
 		\node (B1) at (-3,-8.2)   {$\bf{1}$};
 		\node (B1) at (-2.2,-9)   {$\bf{2}$};	
 		\node (B1) at (-3,-9.8)   {$\bf{3}$};
 		\node (B1) at (-3.8,-9)   {$\bf{4}$};
 	\draw[<->, line width=.2 mm] (-.5,-9) -- (.5,-9);
 		%
 		\fill[black!100!](5.856,-7.99) circle (.05);
 		\fill[black!100!](4.16,-9.55) circle (.05);
 		\node[] (e) at  (6,-7.9) {$\bf{1}$};
 		\node[] (f) at  (4,-9.7) {$\bf{4}$};
 			\draw[edge] (e)  to[bend left] (f);
 			\draw[edge] (f)  to[bend left] (e);
 		\node[] (c) at  (5,-8.99) {$\bf{b}$};
 		\node[] (c) at  (3.9,-8.9) {$\bf{a}$};
 		%
 		%
 		\draw[](7,-10.5) circle (.5);
 		\draw[] (7.5,-10.5) -- (7.6,-10.3);
 		\draw[] (7.5,-10.5) -- (7.3,-10.3);
 			\node[] (e) at  (7,-11.2) {$\bf{3}$};
 				\fill[black!100!](7,-11) circle (.05);
 				\node (B1) at (7,-9.8)   {$\bf{(a+b)}$};
 		\draw[](8,-9) circle (.5);
 		\draw[] (8.5,-9) -- (8.3,-8.8);
 		\draw[] (8.5,-9) -- (8.6,-8.8);
 		\node (B1) at (8,-9.7)   {$\bf{2}$};
 		\fill[black!100!](8,-9.5) circle (.05);
 		\node (B1) at (8,-8.2)   {$\bf{(a+b)}$};
 \node (B1) at (9.8, -11)   {$\bf{etc...}$};
 		\end{tikzpicture}
 		\caption{The numbers appearing  on each edge in the above figure is the weight of the corresponding edge. The left hand side of this figure describes some terms of PIE expression.}
 		\label{fig:f3}	
 	\end{figure}
\end{proof}
Now, we show a determinantal expression of the Lucas numbers. First we consider an $n\times n (n\geq 3)$ matrix \[A=\left(
 \begin{array}{cccccc}
 1 & (-1)^{n+1}\frac{1+\sqrt{5}}{2} & 0  &\cdots  & 0 & \frac{1-\sqrt{5}}{2}\\
 (-1)^{n+1}\frac{1-\sqrt{5}}{2} & 1 & \frac{1+\sqrt{5}}{2}   &\cdots & 0 & 0 \\
 0 & \frac{1-\sqrt{5}}{2}  & 1 & \cdots & 0  & 0  \\
 \vdots & \vdots & \vdots & \ddots & \vdots & \vdots \\
 0 & 0 & 0 & \cdots  & 1 & \frac{1+\sqrt{5}}{2}  \\
 \frac{1+\sqrt{5}}{2} & 0 & 0 & \cdots & \frac{1-\sqrt{5}}{2} & 1 \\
 \end{array}
 \right).\]
 \begin{corollary}
Let $\ell_n (n\geq 3)$ be the $n$-th term of the Lucas number. Then $\ell_n=\frac{1}{2}\det(A).$ 	
 \end{corollary}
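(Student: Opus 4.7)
The plan is to realize the matrix $A$ as a specialization of the general matrix $S$ from \eqref{lucas gen matrix} and then invoke Theorem \ref{gen iden lucas}. Concretely, I would set
\[
a \;=\; \phi \;:=\; \frac{1+\sqrt{5}}{2}, \qquad b \;=\; \psi \;:=\; \frac{1-\sqrt{5}}{2},
\]
which are the two roots of $t^2 - t - 1 = 0$, so $a+b=1$ and $ab=-1$. This is the natural choice because the Lucas recurrence $\ell_n = \ell_{n-1} + \ell_{n-2}$ has exactly this characteristic polynomial.

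First I would verify, entry by entry, that with these values of $a$ and $b$ the matrix $S$ in \eqref{lucas gen matrix} reduces to the matrix $A$. The diagonal entries $a+b$ become $1$; the superdiagonal entries $a$ and the subdiagonal entries $b$ become $\phi$ and $\psi$ respectively; the $(1,2)$ entry $(-1)^{n+1}a$ becomes $(-1)^{n+1}\phi$; the $(2,1)$ entry $(-1)^{n+1}b$ becomes $(-1)^{n+1}\psi$; and the two corner entries $(1,n)=b$ and $(n,1)=a$ become $\psi$ and $\phi$. This matches $A$ column by column, so $\det(A)$ coincides with $\det(S)$ evaluated at $(a,b) = (\phi,\psi)$.

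By Theorem \ref{gen iden lucas} this immediately gives $\det(A) = 2(\phi^n + \psi^n)$. The last step is to identify $\phi^n + \psi^n$ with $\ell_n$. Since $\phi$ and $\psi$ are the roots of $t^2 - t - 1 = 0$, the sequence $v_n := \phi^n + \psi^n$ automatically satisfies $v_n = v_{n-1} + v_{n-2}$, and one checks $v_0 = 2 = \ell_0$, $v_1 = \phi + \psi = 1 = \ell_1$, so $v_n = \ell_n$ for all $n$ by induction. Therefore $\det(A) = 2\ell_n$, which yields $\ell_n = \tfrac{1}{2}\det(A)$.

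There is essentially no obstacle here: once Theorem \ref{gen iden lucas} is in hand, the corollary is a clean specialization, entirely parallel to how Binet's formula for Fibonacci numbers was derived earlier from Corollaries \ref{cor3} and \ref{fibocor} by setting $x+y = 1$ and $xy = -1$ in the tridiagonal matrix \eqref{bienet}. The only mildly delicate point is the term-by-term matching of $S$ with $A$, in particular keeping track of the signs $(-1)^{n+1}$ and the placement of the corner entries, but this is a routine comparison rather than a genuine difficulty.
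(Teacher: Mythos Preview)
Your argument is correct, but it takes a different route from the paper at the final step. You establish $\phi^n+\psi^n=\ell_n$ by the standard algebraic check (both sequences satisfy the same recurrence with the same initial values), treating Binet's formula for Lucas numbers as an input. The paper instead \emph{derives} Binet's formula here: it computes $\det(A)$ a second way, via the combinatorial structure uncovered in the proof of Theorem~\ref{gen iden lucas}. The linear subdigraphs of $D(A)$ other than the two Hamiltonian cycles $L_1,L_2$ consist only of loops (weight $a+b=1$) and $2$-cycles on cyclically adjacent vertices (signed contribution $-ab=1$), and so are in sign-and-weight-preserving bijection with tilings of a circular $n$-board by $1$-tiles and $2$-tiles, which is exactly the combinatorial model for $\ell_n$. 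Adding the contributions $\phi^n$ and $\psi^n$ from $L_1,L_2$ gives $\det(A)=\ell_n+\phi^n+\psi^n$; equating this with $2(\phi^n+\psi^n)$ from Theorem~\ref{gen iden lucas} yields both the corollary and Binet's formula simultaneously. Your proof is shorter if Binet is taken for granted; the paper's proof is the one that delivers on the section's stated aim of obtaining Binet's formula for Lucas numbers combinatorially.
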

 \begin{proof}
If we put $a=\frac{1+\sqrt{5}}{2}$ and $b=\frac{1-\sqrt{5}}{2}$ in the matrix $S,$ then we get the matrix $A.$ So, \[\det(A)=2\left[\left(\frac{1+\sqrt{5}}{2} \right)^n+\left(  \frac{1-\sqrt{5}}{2} \right)^n\right].\] Now using the proof of Theorem \ref{gen iden lucas} and combinatorial interpretation of the Lucas numbers, we can write \[\det(A)=\ell_n+\left(\frac{1+\sqrt{5}}{2} \right)^n+\left(  \frac{1-\sqrt{5}}{2} \right)^n.\] Hence the corollary.
 \end{proof}
 \subsection*{Acknowledgement} I would like to thank my mentor Prof. Arvind Ayyer  for his constant support, encouragement and for valuable discussion and  suggestions in the preparation of this paper. Also I would like to thank Dr. Sajal Kumar Mukherjee for many helpful discussion and proposing Theorem $4.1.$ The author was supported by Department of Science and Technology grant EMR/2016/006624 and partly supported by  UGC Centre for Advanced Studies. Also the author was supported by NBHM Post Doctoral Fellowship grant 0204/52/2019/RD-II/339. 
\bibliographystyle{amsplain}
\bibliography{gen-inv-lcp}
\end{document}